\documentclass[11 pt,a4paper,twoside,reqno]{amsart}
\usepackage{amsfonts,amssymb,amscd,amsmath,enumerate,verbatim,calc}
\usepackage{mathtools}
\usepackage{enumerate}
\usepackage{setspace} \onehalfspacing

\usepackage[margin=0.8in]{geometry}
\pagestyle{plain}
\usepackage{hyperref}
\newtheorem{theorem}{Theorem}[section]
\newtheorem{lemma}[theorem]{Lemma}
\newtheorem{proposition}[theorem]{Proposition}
\theoremstyle{definition}

\newtheorem{corollary}[theorem]{Corollary}
\newtheorem{conjecture}[theorem]{Conjecture}
\newtheorem{ex}[theorem]{Example}
\newtheorem{remark}[theorem]{Remark}
\newtheorem{question}[theorem]{Question}

\def\N{{\mathbb N}}

\def\R{{\mathbb R}}
\def\RP{{\mathbb R}\mathrm{P}}
\def\Z{{\mathbb Z}}
\def\cB{{\mathcal B}}

\def\P{{\mathcal P}}

\newcommand{\vr}[2]{\mathrm{VR}(#1;#2)}

\newcommand{\cech}[2]{\mathrm{\check{C}}(#1;#2)}
\newcommand{\acech}[3]{\mathrm{\check{C}}(#1,#2;#3)}

\DeclareMathOperator{\conn}{conn}
\DeclareMathOperator{\st}{st}
\DeclareMathOperator{\diam}{diam}

\DeclareMathOperator{\cov}{cov}
\DeclareMathOperator{\CN}{CN}
\DeclareMathOperator{\numcov}{numCover}
\DeclareMathOperator{\numpack}{numPack}
\newcommand{\bor}[2]{\mathrm{Bor}(#1;#2)}
\newcommand{\cbor}[2]{\mathrm{Bor}[#1;#2]}

\DeclareMathOperator{\hdim}{h-dim}

\setcounter{tocdepth}{1}

\usepackage{xcolor}

\usepackage{color}

\title{Homotopy connectivity of \v{C}ech complexes of spheres}

\author{Henry Adams}
\address{Henry Adams, Department of Mathematics, University of Florida, 358 Little Hall, Gainesville, FL 32611, USA.}
\email{henry.adams@ufl.edu}

\author{Ekansh Jauhari}
\address{Ekansh Jauhari, Department of Mathematics, University of Florida, 358 Little Hall, Gainesville, FL 32611, USA.}
\email{ekanshjauhari@ufl.edu}

\author{Sucharita Mallick}
\address{Sucharita Mallick, Department of Mathematics, University of Florida, 358 Little Hall, Gainesville, FL 32611, USA.}
\email{sucharitamallick@ufl.edu}


\date{\today}

\subjclass[2020]
{Primary 
55U10, 
Secondary 
55Q52, 
05E45, 
52C17. 
}

\keywords{\v{C}ech complex, homotopy connectivity, Borsuk graph, chromatic number, homological dimension, neighborhood complex.}

\begin{document}

\begin{abstract}
%
Let $S^n$ be the $n$-sphere with the geodesic metric and of diameter $\pi$.
The intrinsic \v{C}ech complex $\cech{S^n}{r}$ is the nerve of all open balls of radius $r$ in $S^n$.
In this paper, we show how to control the homotopy connectivity of $\cech{S^n}{r}$ in terms of coverings of spheres.
Our upper bound on the connectivity, which is sharp in the case $n=1$, comes from the chromatic numbers of Borsuk graphs of spheres.
Our lower bound is obtained using the conicity (in the sense of Barmak) of $\cech{X}{r}$ for sufficiently dense, finite subsets $X$ of $S^n$.
Our bounds imply the new result that for $n\ge 1$, the homotopy type of $\cech{S^n}{r}$ changes infinitely many times as $r$ varies over $(0,\pi)$; we conjecture only countably many times.
Additionally, we lower bound the homological dimension of \v{C}ech complexes of finite subsets $X$ of $S^n$ in terms of packings of $X$.
\end{abstract}

\maketitle



\section{Introduction}
\v{C}ech complexes are among the most important tools in applied topology for approximating the shape of data.
Indeed, whereas a finite dataset has no interesting topology on its own, if one replaces each point in the dataset with a ball of radius $r$, then the union of the balls serves as a proxy for the shape of the data.
Even better, by letting the radius $r$ vary from small to large, one can use persistent homology to get a multi-resolution summary of the shape of the data~\cite{Carlsson2009}.
Though the union of balls is difficult to store on a computer, the \v{C}ech complex is a combinatorial representation with a vertex for each data point, an edge whenever two balls intersect, a triangle whenever three balls intersect, etc.
Since balls in Euclidean space are convex, the nerve lemma implies that this \v{C}ech complex is homotopy equivalent to the union of the balls (see~\cite{Borsuk1948} or~\cite[Corollary~4G.3]{Hatcher}).

The theory of \v{C}ech complexes for balls in Euclidean space is well-understood.
But what about for balls in other manifolds?
In a Riemannian manifold $M$, a ball of radius $r$ need only be geodesically convex up to a certain scale $r$ (called the \emph{convexity radius}) depending on the curvature of the manifold.
For larger scales, the intersection of two balls may be disconnected, meaning that the hypotheses of the nerve lemma may no longer be satisfied.
What are the homotopy types of intrinsic \v{C}ech complexes of manifolds, where \emph{intrinsic} means that the \v{C}ech complex is the nerve of balls restricted to live in the manifold?
In this paper, we initiate the study of this question for the simplest non-contractible manifolds: spheres.
We hope our paper inspires follow-up work on the intrinsic \v{C}ech complexes of other manifolds.
Furthermore, instead of the statistical context of a finite dataset $X$ sampled from the $n$-sphere $S^n$, 
we begin with the mathematical context of the \v{C}ech complex of the entire $n$-sphere.
If one's dataset $X$ is sampled from a manifold $M$, then in the limit, as more data points are sampled, the stability of persistent homology~\cite{ChazalDeSilvaOudot2014} implies that the intrinsic \v{C}ech persistent homology of the dataset $X$ converges to the intrinsic \v{C}ech persistent homology of the entire manifold $M$.

What are the homotopy types of intrinsic \v{C}ech complexes of spheres?
Equip the $n$-sphere $S^n$ with the geodesic metric of diameter $\pi$.
If the scale $r$ satisfies $0<r\le\frac{\pi}{2}$, then the open balls $\{B_{S^n}(x;r)\}_{x\in S^n}$ form a good cover of the sphere $S^n$, meaning that any intersection of balls is either empty or contractible.
Since $S^n$ is paracompact as it is a metric space, the nerve lemma implies that the intrinsic \v{C}ech complex $\cech{S^n}{r}\coloneqq \mathrm{Nerve}(\{B_{S^n}(x;r)\}_{x\in S^n})$ is homotopy equivalent to $S^n$ (\cite[Theorem~4G.3]{Hatcher}; see also~\cite[Theorem~13.1.3]{tom2008algebraic}).
But what are the homotopy types 
of $\cech{S^n}{r}$ when $\frac{\pi}{2}<r<\pi$?

The case $n=1$ is well-understood by~\cite[Section~9]{AA-VRS1}: as the scale $r$ increases, the complexes $\cech{S^1}{r}$ obtain the homotopy types of $S^1$, $S^3$, $S^5$, $S^7$, \ldots.
We also refer the reader to~\cite{AAFPP-J} for a description of the homotopy type of $\cech{X}{r}$ for $X$ an arbitrary finite subset of $S^1$, and to~\cite{lim2026strange} for a description of the expected Euler characteristic of $\cech{X}{r}$ for $X$ a finite subset chosen uniformly at random from $S^1$.
However, for $n\ge 2$, essentially nothing is known about the homotopy types of $\cech{S^n}{r}$ for $\frac{\pi}{2}<r<\pi$, except that they are simply connected by~\cite[Theorem 11.2]{virk20201}.
In particular, though the first new homotopy type beyond $S^n$ (i.e., the homotopy type of $\cech{S^n}{\frac{\pi}{2}+\varepsilon}$ for $\varepsilon>0$ sufficiently small) seems within reach, to our knowledge no conjecture for this homotopy type has appeared in the literature.
Can we bound the homotopy connectivity of $\cech{S^n}{r}$ as a function of the scale $r$?

We denote the homotopy connectivity of a topological space $X$ by $\conn(X)$, so $\pi_i(X)$ vanishes for all $i\le \conn(X)$.
For a metric space $(X,d)$, the $k$-th covering radius, denoted $\cov_X(k)$, is the infimum $r\ge 0$ such that there exist $k$ closed balls of radius $r$ that cover $X$.
We show in Proposition~\ref{prop:fintoinf}, using Barmak's Appendix~\cite{farber2023large}, that if $0<\delta<\cov_{S^n}(2k+2)$, then $\conn(\cech{S^n}{\pi-\delta}) \ge k$.
Next, using Lov{\'a}sz's classical bound on the chromatic number of a graph in terms of the connectivity of its neighborhood complex~\cite{Lovasz1978}, we upper bound the connectivity of \v{C}ech complexes of spheres.
The (open) Borsuk graph on $S^n$ at scale $\delta>0$, denoted $\bor{S^n}{\delta}$, is the infinite graph with vertex set $S^n$ whose edges are the pairs of vertices $v,w$ which are more than $\delta$ apart in the geodesic distance, i.e., $d(v,w)>\delta$.
The neighborhood complex of the Borsuk graph $\bor{S^n}{\delta}$ is the \v{C}ech complex $\cech{S^n}{\pi-\delta}$.
As a result, in Proposition~\ref{prop:upper}, we show that if $\delta> 2\cov_{S^n}(k+1)$, then we have that $\conn(\cech{S^n}{\pi-\delta})\leq k-2$.

When combined together, we get strong bounds on the connectivity of \v{C}ech complexes of spheres.

\begin{theorem}
\label{thm:main}
For $n\ge 1$ and $\delta\in(0,\pi)$, if $\conn(\cech{S^n}{\pi-\delta})=k-1$, then
\[\cov_{S^n}(2k+2)\le \delta\le 2\cov_{S^n}(k+1).\]
\end{theorem}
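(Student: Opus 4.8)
The plan is to obtain Theorem~\ref{thm:main} as a direct corollary of Propositions~\ref{prop:fintoinf} and~\ref{prop:upper}, read contrapositively; the two displayed inequalities are logically independent, so I would establish each separately. Since $\conn(\cech{S^n}{\pi-\delta}) = k-1$ is an equality of integers, the only facts I will repeatedly invoke are that $k-1 < k$, that $k-1 > k-2$, and that $\delta > 0$ throughout because $\delta\in(0,\pi)$.

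For the lower bound $\cov_{S^n}(2k+2)\le\delta$, I would argue by contradiction. Suppose instead that $\delta < \cov_{S^n}(2k+2)$. Then $0 < \delta < \cov_{S^n}(2k+2)$, so Proposition~\ref{prop:fintoinf} applies and yields $\conn(\cech{S^n}{\pi-\delta}) \ge k$. This contradicts the hypothesis $\conn(\cech{S^n}{\pi-\delta}) = k-1 < k$. Hence $\delta \ge \cov_{S^n}(2k+2)$, as desired.

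For the upper bound $\delta \le 2\cov_{S^n}(k+1)$, I would again argue by contradiction. Suppose $\delta > 2\cov_{S^n}(k+1)$. Then Proposition~\ref{prop:upper} gives $\conn(\cech{S^n}{\pi-\delta}) \le k-2$, which contradicts $\conn(\cech{S^n}{\pi-\delta}) = k-1 > k-2$. Hence $\delta \le 2\cov_{S^n}(k+1)$, completing the bound. Combining the two displayed inequalities yields the claimed chain $\cov_{S^n}(2k+2)\le \delta\le 2\cov_{S^n}(k+1)$.

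The genuine mathematical content lies entirely in the two propositions being cited—Proposition~\ref{prop:fintoinf}, which rests on Barmak's conicity results and the passage from sufficiently dense finite subsets to the whole sphere, and Proposition~\ref{prop:upper}, which uses Lov\'asz's neighborhood-complex bound applied to Borsuk graphs. Given those inputs there is no substantive obstacle in the present argument; the one point that warrants care is making sure the strict versus non-strict inequalities align so that the contrapositives deliver the closed inequalities $\le$ appearing in the conclusion, which they do precisely because $\conn$ is integer-valued and the hypothesis pins it to the single value $k-1$.
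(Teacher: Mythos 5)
Your proposal is correct and takes exactly the paper's route: the paper proves Theorem~\ref{thm:main} simply by combining Propositions~\ref{prop:fintoinf} and~\ref{prop:upper}, read in contrapositive form, just as you do. The only hypothesis worth a passing mention is the requirement $k\ge 1$ in Proposition~\ref{prop:upper}, which is automatic here because $\cech{S^n}{\pi-\delta}$ is path-connected for $\delta\in(0,\pi)$, so $k-1=\conn(\cech{S^n}{\pi-\delta})\ge 0$.
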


See Figure~\ref{fig:barsS2} for an illustration of this theorem in the case $n=2$.

\begin{figure}[htb]
\begin{center}
\includegraphics[width=5in]{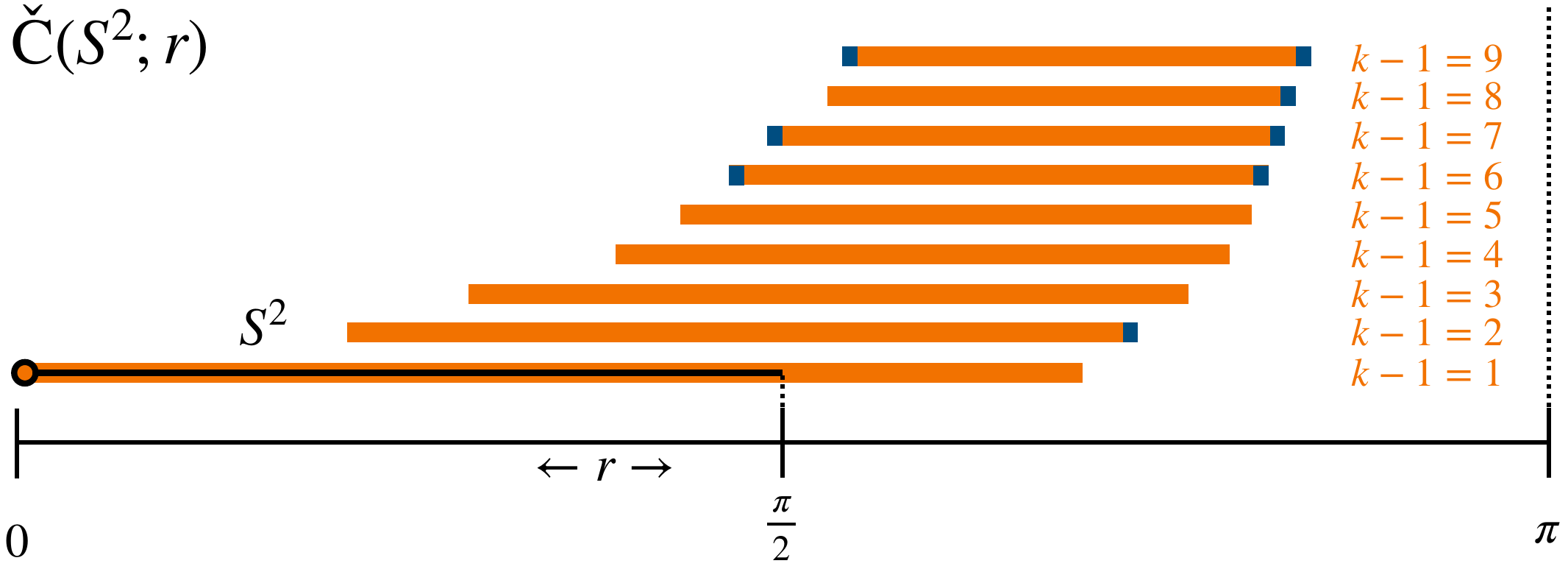}
\end{center}
\caption{Intervals where $\cech{S^2}{r}$ may have connectivity $k-1$, given by Theorem~\ref{thm:main}.
The blue endpoints are plotted using only approximate values of $\cov_{S^2}(2k+2)$ or of $2\cov_{S^2}(k+1)$; see~\cite{tarnai1991covering}.}
\label{fig:barsS2}
\end{figure}

For a metric space $(X,d)$, the $r$-covering number, denoted $\numcov_X(r)$, is the smallest integer $n$ such that $X$ can be covered by $n$ closed balls of radius $r>0$.
Rearranging the inequalities in Theorem~\ref{thm:main} gives the following corollary.

\begin{corollary}
\label{cor:main}
For $n\ge 1$ and $\delta\in(0,\pi)$, the connectivity of $\cech{S^n}{\pi-\delta}$ satisfies
\[\tfrac{1}{2}\numcov_{S^n}(\delta) - 2 \le \conn(\cech{S^n}{\pi-\delta}) \le \numcov_{S^n}\left(\tfrac{\delta}{2}\right)-2.\]
\end{corollary}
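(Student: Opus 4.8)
The plan is to exploit the fact that on the compact space $S^n$ the covering radius $\cov_{S^n}$ and the covering number $\numcov_{S^n}$ are mutually inverse. Since $S^n$ is compact, the infimum defining $\cov_{S^n}(m)$ is attained by closed balls, so for every integer $m\ge 1$ and every $r>0$ one has the equivalence $\numcov_{S^n}(r)\le m \iff \cov_{S^n}(m)\le r$, equivalently $\numcov_{S^n}(r)\ge m \iff \cov_{S^n}(m-1)>r$. Writing $k=\conn(\cech{S^n}{\pi-\delta})+1$ and feeding this into Theorem~\ref{thm:main} (whose hypothesis $\conn=k-1$ then holds) gives $\cov_{S^n}(2k+2)\le\delta\le 2\cov_{S^n}(k+1)$. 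The two halves of the corollary will come from rearranging these two inequalities separately.

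For the lower bound I would start from the left inequality $\cov_{S^n}(2k+2)\le\delta$. Applying the equivalence with $m=2k+2$ and $r=\delta$ immediately yields $\numcov_{S^n}(\delta)\le 2k+2=2\conn(\cech{S^n}{\pi-\delta})+4$, and solving for the connectivity gives $\conn(\cech{S^n}{\pi-\delta})\ge \tfrac12\numcov_{S^n}(\delta)-2$. This direction uses only the ``$\le$'' form of the equivalence and needs no further input.

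For the upper bound I would rewrite the right inequality as $\tfrac{\delta}{2}\le \cov_{S^n}(k+1)$, where $k+1=\conn(\cech{S^n}{\pi-\delta})+2$, and aim to deduce $\numcov_{S^n}(\tfrac{\delta}{2})\ge \conn(\cech{S^n}{\pi-\delta})+2$, which rearranges to the claimed bound $\conn(\cech{S^n}{\pi-\delta})\le \numcov_{S^n}(\tfrac{\delta}{2})-2$. When the inequality is strict, i.e.\ $\tfrac{\delta}{2}<\cov_{S^n}(k+1)$, this is automatic: no collection of $k+1$ closed balls of radius $\tfrac{\delta}{2}$ can cover $S^n$, so $\numcov_{S^n}(\tfrac{\delta}{2})\ge k+2$ and the desired inequality holds with room to spare.

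The main obstacle is the boundary case $\delta=2\cov_{S^n}(k+1)$. Here $\cov_{S^n}(k+1)=\tfrac{\delta}{2}$, so $k+1$ optimally placed caps of radius $\tfrac{\delta}{2}$ already cover $S^n$, and to conclude $\numcov_{S^n}(\tfrac{\delta}{2})\ge k+1$ (rather than something smaller, which would break the bound) I must rule out a \emph{plateau} $\cov_{S^n}(k)=\cov_{S^n}(k+1)$. In other words, the upper bound hinges on the \textbf{strict monotonicity} of $m\mapsto\cov_{S^n}(m)$ --- equivalently, on the statement that $\numcov_{S^n}$ skips no integer value, so that adding one more ball strictly decreases the optimal covering radius of the sphere. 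I would establish this strict monotonicity for $S^n$ (it is consistent with the tabulated optimal covering radii in~\cite{tarnai1991covering}); via the equivalence $\numcov_{S^n}(r)\ge m\iff\cov_{S^n}(m-1)>r$ this gives $\numcov_{S^n}(\tfrac{\delta}{2})\ge k+1$ and completes the upper bound.
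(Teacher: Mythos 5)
Your route is the paper's own: it proves the two inequalities separately (Corollaries~\ref{cor:lowerbound2} and~\ref{cor:upperbound2}) by rearranging Propositions~\ref{prop:fintoinf} and~\ref{prop:upper} through exactly the duality you set up, namely $\numcov_{S^n}(r)\le m \iff \cov_{S^n}(m)\le r$, which is valid because the covering radius is attained on the compact space $S^n$~\cite[Lemma~3.1]{ABV}. Your lower-bound half and your treatment of the strict case $\tfrac{\delta}{2}<\cov_{S^n}(k+1)$ are correct and match the paper. The gap is the key lemma you invoke for the boundary case $\delta=2\cov_{S^n}(k+1)$: strict monotonicity of $m\mapsto\cov_{S^n}(m)$ is \emph{false} for every $n\ge 2$. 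By the Lyusternik--Schnirel'man--Borsuk covering theorem (quoted in Section~\ref{sec:chromatic}), in any cover of $S^n$ by at most $n+1$ closed sets, some set contains a pair of antipodal points, and a closed geodesic ball containing a pair of antipodal points must have radius at least $\tfrac{\pi}{2}$; since two closed hemispheres cover $S^n$, this forces the plateau $\cov_{S^n}(2)=\cov_{S^n}(3)=\cdots=\cov_{S^n}(n+1)=\tfrac{\pi}{2}$. In particular $\cov_{S^2}(2)=\cov_{S^2}(3)=\tfrac{\pi}{2}$, so the claim is not ``consistent with the tabulated optimal covering radii'' even for $S^2$. This particular plateau sits at $\delta=\pi$, just outside the corollary's range, but your argument needs the absence of plateaus at every radius $\tfrac{\delta}{2}<\tfrac{\pi}{2}$, and that neither follows from the finitely many values in~\cite{tarnai1991covering} nor is established anywhere in the paper; it is not something one can simply assert.

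The boundary case should instead be closed by bypassing covering radii altogether and bounding the chromatic number directly. A cover of $S^n$ by $N=\numcov_{S^n}(\tfrac{\delta}{2})$ closed balls of radius $\tfrac{\delta}{2}$ is a proper $N$-coloring of the \emph{open} Borsuk graph $\bor{S^n}{\delta}$: each such ball has diameter at most $\delta$ by the triangle inequality, while edges of $\bor{S^n}{\delta}$ require distance strictly greater than $\delta$, so each ball is an independent set. Hence $\chi(\bor{S^n}{\delta})\le \numcov_{S^n}(\tfrac{\delta}{2})$, and Lemma~\ref{lem:lovasz} then gives $\conn(\cech{S^n}{\pi-\delta})\le\chi(\bor{S^n}{\delta})-3\le\numcov_{S^n}(\tfrac{\delta}{2})-3$, which is slightly stronger than the stated bound and has no strictness issue, precisely because the open edge convention absorbs the boundary case. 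Incidentally, you have put your finger on a real subtlety: the paper's own proof of Corollary~\ref{cor:upperbound2} passes from $\tfrac{\delta}{2}\le\cov_{S^n}(k+1)$ to $k+1\le\numcov_{S^n}(\tfrac{\delta}{2})$ without comment, a step with the same plateau blind spot; the coloring argument above repairs both proofs, whereas strict monotonicity repairs neither.
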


From~\cite[Corollary 1.2]{ABV}, we know that the connectivity of \emph{Vietoris--Rips} complexes of sphere satisfies
\[\tfrac{1}{2}\numcov_{S^n}(\delta) - 2 \le \conn(\vr{S^n}{\pi-\delta}) \le \numcov_{\RP^n}\left(\tfrac{\delta}{2}\right)-2\]
for each $n\ge 1$ and $\delta>0$.
So, while the current best bounds on $\conn(\vr{S^n}{r})$ depend on the covering numbers of both $\RP^n$ and $S^n$, the current best bounds on $\conn(\cech{S^n}{r})$ depend only on the covering numbers of $S^n$.

It follows as a consequence that the homotopy type of $\cech{S^n}{r}$ changes infinitely many times.

\begin{corollary}
For $n\ge 1$, the homotopy type of the \v{C}ech complex $\cech{S^n}{r}$ changes infinitely many times as $r$ varies over the range $(0,\pi)$.
\end{corollary}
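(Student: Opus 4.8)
The plan is to deduce this directly from Corollary~\ref{cor:main}, which sandwiches $\conn(\cech{S^n}{\pi-\delta})$ between two covering numbers of $S^n$. The key observation is that $\numcov_{S^n}(\delta)\to\infty$ as $\delta\to 0^+$. This is an elementary packing/volume estimate: a cover of $S^n$ by closed balls of radius $\delta$ must satisfy $\numcov_{S^n}(\delta)\ge \mathrm{vol}(S^n)/\mathrm{vol}(B_{S^n}(\delta))$, and the right-hand side diverges as $\delta\to 0^+$ since $\mathrm{vol}(S^n)$ is fixed while $\mathrm{vol}(B_{S^n}(\delta))\to 0$. Feeding this into the lower bound of Corollary~\ref{cor:main} gives $\conn(\cech{S^n}{\pi-\delta})\ge \tfrac{1}{2}\numcov_{S^n}(\delta)-2\to\infty$ as $\delta\to 0^+$, so the connectivity of $\cech{S^n}{r}$ is unbounded as $r\to\pi^-$.

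Next I would record two standard facts about the function $f(r)\coloneqq\conn(\cech{S^n}{r})$ on the interval $(0,\pi)$. First, $f$ is a homotopy invariant, so homotopy equivalent complexes have equal $f$-values. Second, $f$ is a \emph{finite} integer for every $r\in(0,\pi)$: the upper bound of Corollary~\ref{cor:main} gives $f(\pi-\delta)\le\numcov_{S^n}(\tfrac{\delta}{2})-2<\infty$ for each $\delta\in(0,\pi)$, so $\cech{S^n}{r}$ is never contractible in this range and $f$ never takes the value $+\infty$. Combining this with the previous paragraph, $f\colon(0,\pi)\to\Z$ is an integer-valued function that is unbounded above, hence it necessarily attains infinitely many distinct values.

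Finally, since complexes with distinct connectivity cannot be homotopy equivalent, the infinitely many distinct values of $f$ are realized by infinitely many pairwise non-homotopy-equivalent complexes among $\{\cech{S^n}{r}\}_{r\in(0,\pi)}$; consequently the homotopy type must change infinitely many times as $r$ varies over $(0,\pi)$, as claimed. I do not expect a serious obstacle here: the argument is a soft consequence of the quantitative bounds already in hand. The only step requiring any care is the divergence $\numcov_{S^n}(\delta)\to\infty$, and even this is routine once one invokes the volume bound above (or, alternatively, a maximal-packing argument). The conceptual content is entirely carried by Corollary~\ref{cor:main}.
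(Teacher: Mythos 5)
Your proof is correct and follows essentially the same route as the paper: both use the left-hand inequality of Corollary~\ref{cor:main} together with $\numcov_{S^n}(\delta)\to\infty$ as $\delta\to 0$ to show the connectivity of $\cech{S^n}{\pi-\delta}$ is unbounded, the right-hand inequality to show it is always finite, and the homotopy invariance of connectivity to conclude that infinitely many homotopy types occur. The extra details you supply (the volume estimate for the divergence of the covering number, and the explicit remark that distinct connectivities preclude homotopy equivalence) are fine but not needed beyond what the paper records.
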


\begin{proof}
Let $r=\pi-\delta$ for $\delta\in(0,\pi)$.
Note that $\numcov_{S^n}(\delta)\to\infty$ as $\delta\to 0$.
So, from the left-hand inequality in Corollary~\ref{cor:main}, we see that as $\delta$ tends to $0$, the connectivity of $\cech{S^n}{\pi-\delta}$ tends to infinity.
On the other hand, since $\numcov_{S^n}(\delta)<\infty$ for all $\delta>0$, we get from the right-hand inequality in Corollary~\ref{cor:main} that the connectivity of $\cech{S^n}{\pi-\delta}$ is always finite.
Hence, as $r$ varies between $0$ and $\pi$, $\conn(\cech{S^n}{\pi-\delta})$ attains infinitely many integer values.
\end{proof}

We conjecture that the homotopy type of $\cech{S^n}{r}$ changes only countably many times as $r$ varies; see Conjecture~\ref{conj:countably}.

How good are our bounds on $\cech{S^n}{\pi-\delta}$?
We are able to test this in the case of $n=1$.
From~\cite[Theorem 9.8]{AA-VRS1}, we know that $\cech{S^1}{r}$ obtains the homotopy types $S^1, S^3, S^5, S^7, \ldots$ as $r$ increases.
Thus, we compare the scales obtained in Theorem~\ref{thm:main} with the actual scales of those connectivities, and it turns out that for $n=1$, the lower bound on $\delta$ in the theorem is off by approximately a multiplicative factor of $4$, whereas the upper bound on $\delta$ is tight.
See Figure~\ref{fig:barsS1}.

\begin{figure}[htb]
\begin{center}
\includegraphics[width=\textwidth]{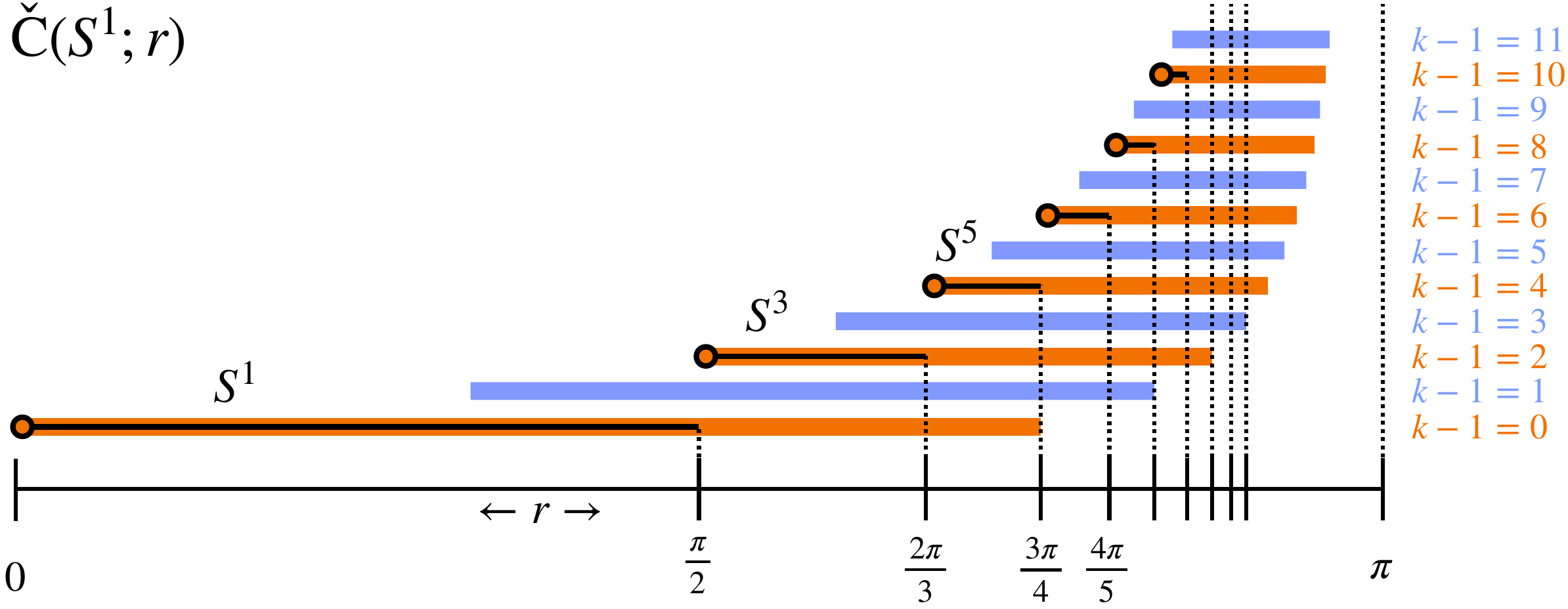}
\end{center}
\caption{The homotopy types of $\cech{S^1}{r}$ as $r$ varies~\cite{AA-VRS1} are indicated by black bars.
Theorem~\ref{thm:main} gives intervals where $\cech{S^1}{r}$ may have connectivity $k-1$, which are indicated by colored bars.
Left endpoints of orange bars (when $k-1$ is even) are tight.}
\label{fig:barsS1}
\end{figure}

We also study the homological dimension of \v{C}ech complexes of (finite subsets) of spheres.
The $\Z$-homological dimension of a topological space $Y$, denoted $\hdim_{\Z}(Y)$, is the largest integer $i$ such that $\widetilde{H}_i(Y; \Z) \neq 0$.
We give a lower bound on the $\Z$-homological dimension of $\cech{X}{\pi -\delta}$ for all finite subsets $X\subseteq S^n$ in terms of the packings of $X$, and we ask if these lower bounds extend to $\cech{S^n}{\pi -\delta}$.

The paper is organized as follows.
In Section~\ref{sec:related}, we survey some related work on \v{C}ech complexes of spheres, and in Section~\ref{sec:prelim}, we introduce the definitions and notations that we will use in this paper.
We give a lower bound on the homotopy connectivity of \v{C}ech complexes of spheres using Barmak's work relating homotopy connectivity to simplicial conicity in Section~\ref{sec:lower}.
We upper bound the homotopy connectivity of \v{C}ech complexes of spheres using Lov{\'a}sz's seminal result relating the chromatic number of a graph to the connectivity of its neighborhood complex in Section~\ref{sec:chromatic}.
In Section~\ref{sec:circle}, we describe what our bounds on connectivity give in the case of the circle.
In Section~\ref{sec:hdim}, we bound the homological dimension of \v{C}ech complexes of finite subsets of spheres from below using the theory of box complexes, and we study the problem of extending our lower bounds to \v{C}ech complexes of the entire spheres.
We conclude this paper in Section~\ref{sec:conclusion} with a list of open questions and conjectures.

\section{Related work}
\label{sec:related}

In~\cite{Lovasz1978}, Lov{\'a}sz famously used topological arguments, and in particular, the Borsuk--Ulam theorem, to lower bound the chromatic numbers of Kneser graphs.
Though Kneser graphs are finite graphs of a combinatorial flavor, Lov{\'a}sz's techniques for understanding their chromatic numbers were partially inspired by the infinite and geometrically flavored Borsuk graphs (see~\cite[Footnote~1]{Lovasz1978}), which have $n$-spheres as their vertex sets.
Lov{\'a}sz proved that the chromatic number of \emph{any} graph $G$ satisfies $\chi(G)\ge\conn(N(G))+3$, where $N(G)$ is the neighborhood simplicial complex of $G$.
He further showed that this general lower bound turns out to be an equality in the case of Kneser graphs.

The neighborhood complexes of Borsuk graphs are easily identified to be \v{C}ech complexes of spheres.
Indeed, the Borsuk graph $\bor{S^n}{\delta}$ has $S^n$ as its vertex set and an edge between any two vertices at a distance more than $\delta$.
Its neighborhood complex turns out to be the nerve simplicial complex of all open balls in $S^n$ of radius $\pi-\delta$, recovering the \v{C}ech complex $\cech{S^n}{\pi-\delta}$ on the nose.
Hence, by choosing $G=\bor{S^n}{\delta}$ in Lov{\'a}sz's bound, we relate the connectivity of \v{C}ech complexes of spheres to the chromatic numbers of Borsuk graphs:
\[
\chi(\bor{S^n}{\delta}) \ge \conn(N(\bor{S^n}{\delta}))+3 = \conn(\cech{S^n}{\pi-\delta})+3.
\]
This recovers the classical result that the chromatic number of the Borsuk graph is at least $n+2$ for scales $\delta$ which are only slightly less than $\pi$~\cite{lovasz1983self,raigorodskii2010chromatic,raigorodskii2012chromatic}\footnote{In these references, the result for chromatic numbers is proven for closed Borsuk graphs $\cbor{S^n}{\delta}$, which have an edge between vertices $v$ and $w$ whenever $d(v,w)\ge \delta$.
However, this result remains true (see Remark~\ref{rem:opencloseborsuk}) when Borsuk graphs are considered with the ``open" convention~\cite{erdos-hajnal1967} used in this paper.}, since in this regime, the nerve lemma~\cite{Borsuk1948,Hatcher} applies to give $\cech{S^n}{\pi-\delta}\simeq S^n$, yielding $\chi(\bor{S^n}{\delta}) \ge \conn(S^n)+3 = n+2$.
There is furthermore an equality $\chi(\bor{S^n}{\delta}) = n+2$ for $\delta$ sufficiently close to $\pi$.
However, in this paper, we allow the scale $\delta$ to be far away from $\pi$.
Using known bounds on the chromatic numbers of {Borsuk} graphs in terms of packings and coverings of spheres (see, for example,~\cite{MoyPhD}), we obtain upper bounds on the connectivity of \v{C}ech complexes of spheres at all scale parameters.

To lower bound the connectivity of \v{C}ech complexes of spheres, we rely on the appendix by Barmak in Farber's paper~\cite{farber2023large}.
This appendix builds upon prior work~\cite{meshulam2001clique,meshulam2003domination,chudnovsky2000systems,Kahle2009} lower bounding the connectivity of clique complexes of graphs.
Crucially, Barmak's appendix generalizes these results to arbitrary simplicial complexes, meaning they can be applied to the non-clique \v{C}ech simplicial complexes we study here.
We refer the reader to related results on the connectivity of Vietoris--Rips complexes of hypercube graphs~\cite{bendersky2023connectivity}, on the connectivity of clique complexes of preferential attachment graphs~\cite{siu2024topological}, on the connectivity of Vietoris--Rips complexes of spheres~\cite{ABV}, and on the connectivity of independence complexes of graphs~\cite{adamaszek2011lower} and hypergraphs~\cite{taylan2024bounds}.
See~\cite{bobrowski2017vanishing,bobrowski2022homological,bobrowski2019random,lim2026strange} for results on the topology of random \v{C}ech complexes,~\cite{virk20201} for some applications of the fundamental group functor to the \v{C}ech filtration of compact geodesic spaces, and~\cite{kahle2020chromatic} for results on the chromatic numbers of random Borsuk graphs. 

Researchers in applied topology use \v{C}ech and Vietoris--Rips complexes to approximate the shape of a dataset.
If one's dataset is sampled from a manifold, then in the limit, as more and more points are sampled, the persistent homology of the sample converges to the persistent homology of the manifold.
For small scales, Hausmann's theorem~\cite{Hausmann1995} implies that the Vietoris--Rips complex of the manifold is homotopy equivalent to the manifold, and the nerve lemma~\cite{Borsuk1948} implies that the intrinsic \v{C}ech complex of the manifold is homotopy equivalent to the manifold.
However, we do not yet have a good mathematical understanding of the behavior of the Vietoris--Rips or \v{C}ech complexes of manifolds at larger scales, even though the whole idea of persistent homology is to allow the scale to vary from small to large.
{Determining the homotopy types of Vietoris--Rips complexes of spheres is a difficult but largely open problem}~\cite{AA-VRS1,lim2022vietoris,ABV}, which has important connections to Gromov--Hausdorff distances between unit spheres of different dimensions~\cite{lim2022gromov,GH-BU-VR,harrison2023quantitative}.
Though the homotopy types of intrinsic \v{C}ech complexes of spheres have been studied less, we think this may be an even more natural problem.
Indeed, these homotopy types can be viewed as generalizations of the nerve lemma, for one may ask:
What are the homotopy types of the nerve of balls in a sphere (or more generally in a manifold) when the radii are now large enough so that the hypotheses of the nerve lemma are no longer satisfied?

We remark that the homotopy types of the intrinsic \v{C}ech complexes of the circle are fully understood by~\cite[Theorem 9.8]{AA-VRS1}.
Indeed, we have $\cech{S^1}{r} \simeq S^{2k+1}$ for $\frac{\pi k}{k+1} < r \le \frac{\pi (k+1)}{k+2}$ for $k\in \N$.
The way to interpret these homotopy types is as follows.
For $0<r\le \frac{\pi}{2}$, open balls of radius $r$ (circular arcs of length $2r$) in the circle are geodesically convex, and hence, the nerve lemma applies to give $\cech{S^1}{r}\simeq S^1$.
However, once $r$ exceeds $\frac{\pi}{2}$, a ball of radius $r$ is no longer geodesically convex, and indeed, the intersection of two antipodal balls is disconnected.
Therefore, the hypotheses of the nerve lemma are not satisfied.
So, up to homotopy type, a circle's worth of disks are attached, giving $\cech{S^1}{r}\simeq S^1*S^1=S^3$ for $\frac{\pi}{2} < r \le \frac{2\pi}{3}$.
Once $r$ exceeds $\frac{2\pi}{3}$, the intersection of three evenly spaced balls now has three connected components, and the homotopy type becomes $(S^1*S^1)*S^1=S^3*S^1=S^5$.
More generally, for $\frac{\pi k}{k+1} < r \le \frac{\pi (k+1)}{k+2}$, we have that $\cech{S^1}{r}$ is homotopy equivalent to a $(k+1)$-fold join of circles $S^1*\cdots*S^1=S^{2k+1}$.

\section{Preliminaries}
\label{sec:prelim}

In this section, we provide preliminary definitions and notations that we will use for metric spaces, topological spaces, simplicial complexes, and graphs.
Important concepts will include covering radii and numbers, connectivity, \v{C}ech complexes, Borsuk graphs, and neighborhood complexes.

\subsection{Metric spaces}
\label{ssec:metric}

Let $(X,d)$ be a metric space.
Further, let $B(x,r)$ be the open ball of radius $r$ centered at $x$, i.e., $B(x;r)=\{y\in X \mid d(y,x)<r\}$.
Also, let $B[x,r]=\{y\in X \mid d(y,x)\leq r\}$ be the closed ball of radius $r$ centered at $x$.
{On occasion, we may write $B_X(x,r)$ or $B_X[x,r]$, for clarity.}

Our bounds on the connectivity of \v{C}ech complexes of spheres will be in terms of the $k$-th covering radius or the $\delta$-covering number of a metric space $(X,d)$, which we now define.

For $k\ge 1$, the \emph{$k$-th covering radius} of a metric space $(X,d)$, denoted $\cov_X(k)$, is defined as 
\[
\cov_X(k) = \inf\left\{r\ge 0 \hspace{1mm} \middle| \hspace{1mm} \text{ there exist } x_1,\ldots,x_k \in X \text{ such that } \bigcup_{i=1}^k B[x_i,r] = X \right\}.
\]
When $X$ is compact, this infimum is attained (see~\cite[Lemma~3.1]{ABV}), in which case $\cov_X(k)$ is the smallest radius such that $k$ closed balls of that radius cover $X$.

We can also ask how many balls of a fixed radius are required to cover a metric space.
For $\delta>0$, the \emph{$\delta$-covering number} of a metric space $(X,d)$, denoted $\numcov_X(\delta)$, is defined as 
\[
\numcov_X(\delta) = \min\left\{n \ge 1 \hspace{1mm} \middle| \hspace{1mm} \text{ there exist } x_1,\ldots,x_n \in X \text{ such that } \bigcup_{i=1}^n B[x_i,\delta] = X \right\}.
\]
Analogously, we can ask how many points in $X$ can be chosen such that any two points are more than a specified distance apart.
For $\delta>0$, the $\delta$-\emph{packing number} of a metric space $(X,d)$, denoted $\numpack_{X}(\delta)$, is defined as
\[
\numpack_{X}(\delta)=\max\left\{m \ge 1 \hspace{1mm} \middle| \hspace{1mm} \text{ there exist } x_1,\ldots,x_m \in X \text{ such that } d(x_i,x_j) > \delta \text{ for all } i,j \right\}.
\]
We note that if the diameter of $X$ is infinite, then both $\numcov_X(\delta)$ and $\numpack_X(\delta)$ are infinite for any $\delta> 0$.

The main metric space of interest in this paper is the $n$-sphere $S^n=\{x\in\R^{n+1}~|~\|x\|=1\}$, equipped with the geodesic metric.
In other words, the distance between two points $x,y\in S^n$ is given by $d(x,y)=\arccos(x\cdot y)\in[0,\pi]$.
With this metric, the sphere $S^n$ has diameter $\pi$.

\subsection{Topological spaces}

We write $Y\simeq Y'$ to denote that the two topological spaces $Y$ and $Y'$ are homotopy equivalent.
The \emph{connectivity} of a non-empty topological space $Y$, denoted $\conn(Y)$, is the maximal index $k$ such that the homotopy groups $\pi_i(Y)$ are trivial for all $i\le k$.

For a commutative ring $R$ with unity, the \emph{$R$-homological dimension} of a non-empty topological space $Y$, denoted $\hdim_R(Y)$, is the maximal degree $i$ such that the $i$-th reduced homology group of $Y$ with coefficients in $R$ is non-zero, i.e., $\widetilde{H}_i(Y; R)\neq 0$.

We note that connectivity and $R$-homological dimensions are homotopy invariants of topological spaces, i.e., if $Y\simeq Y'$, then $\conn(Y)=\conn(Y')$ and $\hdim_R(Y)=\hdim_R(Y')$.

\subsection{Simplicial complexes}

A \emph{simplicial complex $K$} on a vertex set $V$ is a collection of {finite} subsets of $V$, containing all singletons, such that if $\sigma\in K$ and $\tau\subseteq \sigma$, then $\tau \in K$ as well.
In this paper, we identify simplicial complexes with their geometric realizations. 
We will be most interested in simplicial complexes that can be built on top of metric spaces, such as \v{C}ech and Vietoris--Rips simplicial complexes.

A \emph{\v{C}ech complex} on a metric space $(X,d)$ for scale $r > 0$, denoted $\cech{X}{r}$, is the nerve of the collection $\{B(x,r)\}_{x \in X}$ of open balls of radius $r$ in $X$, indexed by the points of $X$.
More explicitly,
\[
\cech{X}{r} = \left\{ \text{finite }\sigma \subseteq X\ \middle|\ \bigcap_{x \in \sigma}B(x,r) \ne \emptyset \right\}.
\]
For $k\ge 0$ and $x_0,\ldots,x_k\in X$, we have that $\sigma = [x_0,\ldots,x_k]$ is a simplex in $\cech{X}{r}$ if there exists some $z_{\sigma} \in X$ such that $x_i \in B(z_{\sigma},r)$ for all $0 \le i \le k$, i.e., if the vertex set of $\sigma$ is contained in an open ball of radius $r$.

A \emph{Vietoris--Rips complex} on a metric space $(X,d)$ for scale $r > 0$, denoted $\vr{X}{r}$, is a simplicial complex having $X$ as its vertex set and a finite set $\sigma \subseteq X$ as a simplex if $\diam(\sigma) < r$; see~\cite{Vietoris27,EdelsbrunnerHarer}.
It is easy to see that $\cech{X}{r}$ is a subcomplex of $\vr{X}{2r}$.

For the case when $X = S^n$ is the $n$-sphere of diameter $\pi$ equipped with the geodesic metric, the homotopy equivalence $\cech{S^n}{r} \simeq S^n$ for $0 < r < \frac{\pi}{2}$ is known by the nerve lemma (for the statement of the nerve lemma, see~\cite[Appendix]{farber2023large} or~\cite[Corollary~4G.3]{Hatcher}).

\subsection{Graphs}
\label{sec:graphs}
All graphs in this paper are simple graphs, meaning there are no loops, and at most a single edge between any two vertices.
Formally, a graph $G=(V,E)$ consists of a set of vertices $V$ and a set $E$ of edges, where each edge is a subset of $V$ of size two.
For $v,v'\in V$, we write $v\sim v'$ to denote that the edge $\{v,v'\}$ is in $E$.

Crucially, we will consider graphs with infinitely many vertices and edges.
For example, for $n\ge 1$ and $r>0$, the 1-skeleton of the \v{C}ech complex $\cech{S^n}{r}$ is an infinite graph.

The \emph{chromatic number} of a graph $G=(V,E)$, denoted $\chi(G)$, is the minimum number of colors needed to label the vertices of $G$ such that no two adjacent vertices share the same color.

The \emph{neighborhood complex} of a graph $G=(V,E)$, denoted $N(G)$, is the simplicial complex with vertex set $V$, which contains a finite subset $\sigma\subseteq V$ as a simplex if there is some vertex $v\in V$ such that $w\sim v$ for all $w\in \sigma$.

The \emph{Borsuk graph of $S^n$} for scale $\delta>0$, denoted $\bor{S^n}{\delta}$, is a simple graph whose vertex set is $S^n$ and whose edges are all pairs of vertices $x,y\in S^n$ with $d(x,y)> \delta$.

\begin{remark}
\label{rem:opencloseborsuk}
While Borsuk graphs were originally introduced in~\cite{erdos-hajnal1967} with the open ($>$) convention that we use in this paper, Borsuk graphs with the closed ($\ge$) convention have been more popular in the literature since; see, for example,~\cite{Lovasz1978,lovasz1983self}.
The closed Borsuk graph of $S^n$ for scale $\delta>0$, denoted $\cbor{S^n}{\delta}$, is a simple graph whose vertex set is $S^n$ and whose edges are all pairs of vertices $x,y\in S^n$ with $d(x,y)\ge \delta$.
For any small $\varepsilon > 0$, we have that 
\begin{equation}
\label{eq:closeopencontainment}
\cbor{S^n}{\delta+\varepsilon} \subseteq\bor{S^n}{\delta} \subseteq \cbor{S^n}{\delta}.
\end{equation}
Therefore, many results about closed Borsuk graphs remain true for open Borsuk graphs as well.
In particular, when $\delta$ is sufficiently close to $\pi$, the equality $\chi(\bor{S^n}{\delta})=n+2$ holds.
\end{remark}

We will see in Section~\ref{sec:chromatic} that the neighborhood complex of the (open) Borsuk graph $\bor{S^n}{\delta}$ is equal to the \v{C}ech complex $\cech{S^n}{\pi-\delta}$; the neighborhood complex of the closed Borsuk graph $\cbor{S^n}{\delta}$ is a nerve complex of closed balls instead of open balls.

\section{A lower bound on connectivity}
\label{sec:lower}

The goal of this section is to provide a lower bound on $\conn(\cech{S^n}{\pi-\delta})$, the connectivity of the \v{C}ech complex of the $n$-sphere $S^n$ at the scale $\pi-\delta$, where $0<\delta<\pi$.

Using the results from Barmak's Appendix in~\cite{farber2023large}, it has been shown in~\cite[Section 4.1]{ABV} that if $0<\delta < \cov_{S^n}(2k+2)$, then the connectivity of the Vietoris--Rips complex of $S^n$ at scale $\pi-\delta$ is at least $k$, i.e., $\conn(\vr{S^n}{\pi-\delta}) \ge k$.
In the same spirit, here we use Barmak's technique to get a lower bound on $\conn(\cech{S^n}{\pi-\delta})$.

We recall that the \emph{closed star} of a vertex $v$ in a simplicial complex $K$, denoted $\st_K(v)$, is the subcomplex
$
\st_K(v) = \left\{ \sigma \in K \mid \sigma \cup \{v\} \in K \right\}
$.
A simplicial complex $K$ is said to be \emph{$\ell$-conic} if any subcomplex of at most $\ell$ vertices is contained in a simplicial cone, i.e., in the closed star $\st_K(v)$ of some vertex $v$.
Barmak's result~\cite[Appendix, Theorem~4]{farber2023large} states that if $K$ is $(2k+2)$-conic, then $K$ is $k$-connected.

\begin{proposition}
\label{prop:fintoinf}
If $0<\delta<\cov_{S^n}(2k+2)$, then $\conn(\cech{S^n}{\pi-\delta}) \ge k$.
\end{proposition}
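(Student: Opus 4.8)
The plan is to verify the single hypothesis of Barmak's theorem~\cite[Appendix, Theorem~4]{farber2023large}, namely that $K := \cech{S^n}{\pi-\delta}$ is $(2k+2)$-conic, and then read off $k$-connectivity directly. So the entire task reduces to showing that any subcomplex $L\subseteq K$ on at most $2k+2$ vertices is contained in the closed star $\st_K(w)$ of a single vertex $w$.

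The key idea is to manufacture the cone apex $w$ by combining the non-covering interpretation of the covering radius with the antipodal symmetry of the sphere. Let $x_1,\ldots,x_m$ (with $m\le 2k+2$) be the vertices of $L$. Since $\delta<\cov_{S^n}(2k+2)\le\cov_{S^n}(m)$ (using monotonicity of the covering radius in the number of balls), the $m$ closed balls $B[x_1,\delta],\ldots,B[x_m,\delta]$ fail to cover $S^n$, so I may pick a point $v\in S^n$ with $d(v,x_i)>\delta$ for every $i$. I then set $w:=-v$, the antipode of $v$. Using the identity $d(-v,x)=\pi-d(v,x)$, valid on the geodesic sphere because $\arccos(-t)=\pi-\arccos(t)$, each $x_i$ satisfies $d(w,x_i)=\pi-d(v,x_i)<\pi-\delta$.

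The payoff is that $w$ serves simultaneously as the cone apex and as its own witness point: each $x_i$ lies in the open ball $B(w,\pi-\delta)$, and so does $w$ itself (trivially, since $0<\pi-\delta$). Hence the whole set $\{x_1,\ldots,x_m,w\}$ lies in a single open ball of radius $\pi-\delta$, which by the definition of the \v{C}ech complex means it is a simplex of $K$. Consequently, for every simplex $\sigma\in L$ we have $\sigma\cup\{w\}\subseteq\{x_1,\ldots,x_m,w\}\in K$, so $\sigma\in\st_K(w)$; that is, $L\subseteq\st_K(w)$. This establishes $(2k+2)$-conicity, and Barmak's theorem then yields that $K$ is $k$-connected, i.e.\ $\conn(\cech{S^n}{\pi-\delta})\ge k$.

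I expect the only real subtlety—rather than a genuine obstacle—to be bookkeeping: confirming that $w$ is a legitimate vertex of $K$ (it is, as every singleton is a simplex), and being careful that the \v{C}ech simplex condition uses \emph{open} balls so that $w\in B(w,\pi-\delta)$ is valid. The one place where sphere geometry is essential is the antipodal identity $d(-v,\cdot)=\pi-d(v,\cdot)$, which is exactly what converts ``far from all $x_i$'' into ``close to all $x_i$'' and thereby produces the cone point. This is the same covering-radius mechanism used for Vietoris--Rips complexes in~\cite[Section~4.1]{ABV}, but it is even cleaner in the \v{C}ech setting, since there the apex $w$ can act as a common witness and place the entire vertex set $\{x_1,\ldots,x_m,w\}$ into one ball at once, rather than only guaranteeing the edges from $w$.
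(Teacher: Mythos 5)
Your geometric core---producing the cone apex as the antipode of a point left uncovered by closed $\delta$-balls around the vertices---is exactly the mechanism the paper uses, and your verification that $K=\cech{S^n}{\pi-\delta}$ is $(2k+2)$-conic is correct (indeed, you show the stronger statement that any $\le 2k+2$ vertices together with the apex span a single simplex). The gap is the final step: you apply Barmak's theorem directly to the infinite complex $K=\cech{S^n}{\pi-\delta}$. Barmak's Theorem~4 in the appendix of~\cite{farber2023large} is established only for simplicial complexes whose family of closed stars $\{\st_K(v)\}_{v\in K}$ is \emph{locally finite} in Bj\"orner's sense (every vertex belongs to only finitely many closed stars), because its proof rests on the nerve lemma of~\cite[Theorem~10.6]{Bjorner1995}, which uses that hypothesis. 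For $\cech{S^n}{\pi-\delta}$ the vertex set is all of $S^n$ and every vertex lies in uncountably many closed stars, so the hypothesis fails and the theorem cannot be invoked as stated. This is not a cosmetic point: the conicity-implies-connectivity machinery runs a nerve argument over the cover by closed stars, and without some finiteness it is simply not available in the cited literature.

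The paper circumvents precisely this obstruction in two steps, both of which your proof would need. First, fix $\varepsilon>0$ with $\delta+\varepsilon<\cov_{S^n}(2k+2)$ and a finite $\varepsilon$-dense subset $X\subseteq S^n$, and show that the \emph{finite} complex $\cech{X}{\pi-\delta}$ is $(2k+2)$-conic. Here the extra $\varepsilon$ of room matters because the apex must be a vertex, i.e.\ a point of $X$: your apex $w=-v$ need not lie in $X$, so one instead shows that a whole ball $B_{S^n}(y;\varepsilon)$ sits inside $\bigcap_i B_{S^n}(x_i;\pi-\delta)$ and uses density to replace $y$ by a point of $X$. Barmak's theorem then legitimately applies to this finite complex, giving $\conn(\cech{X}{\pi-\delta})\ge k$. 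Second, a compactness argument transfers the conclusion to the sphere: any continuous map $f\colon S^i\to\cech{S^n}{\pi-\delta}$ with $i\le k$ has compact image, hence lands in $\cech{X}{\pi-\delta}$ for some finite $X$, which may be enlarged to be $\varepsilon$-dense; $f$ is then null-homotopic in $\cech{X}{\pi-\delta}$, and composing the null-homotopy with the inclusion $\cech{X}{\pi-\delta}\hookrightarrow\cech{S^n}{\pi-\delta}$ kills $f$ in the full complex. Without these two steps (or a proof of Barmak's theorem valid for complexes whose star families are not locally finite), your argument does not go through.
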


\begin{proof}
Barmak’s Theorem~4~\cite[Appendix]{farber2023large} is proven for arbitrary simplicial complexes $K$ for which the family $\{\st_K(v)\}_{v \in K}$ is \emph{locally finite} in the sense of Bj\"{o}rner~\cite{Bjorner1995}, i.e., every vertex of $K$ belongs to only finitely many closed stars $\st_K(v)$.
This is because the version of the nerve lemma that Barmak uses is adopted from~\cite[Theorem 10.6]{Bjorner1995} where Bj\"{o}rner uses this assumption in his proof.
For $K = \cech{S^n}{\pi-\delta}$, the family $\{\st_K(v)\}_{v \in K}$ is not locally finite.
So, we adopt the following technique with finite subsets $X\subseteq S^n$ to use Barmak's result.

Let $\varepsilon>0$ be such that $\delta +\varepsilon<\cov_{S^n}(2k+2)$.
Thus, no $2k+2$ \emph{closed} balls of radius $\delta + \varepsilon$ can cover $S^n$.
Let $X$ be a finite subset of $S^n$ that is $\varepsilon$-dense, i.e., such that any open $\varepsilon$-ball in $S^n$ contains a point of $X$.
Then for any collection of $2k+2$ points $x_1,\ldots,x_{2k+2} \in X$, we have by the definition of $\cov_{S^n}(2k+2)$ that
\begin{align*}
    \bigcap_{i=1}^{2k+2}B_{S^n}(x_i;\pi-\delta-\varepsilon) 
    &= S^n \setminus \left(\bigcup_{i=1}^{2k+2}S^n\setminus B_{S^n}\left(x_i;\pi-\delta-\varepsilon\right)\right) \\
    &= S^n \setminus \left(\bigcup_{i=1}^{2k+2} B_{S^n}\left[-x_i;\delta+\varepsilon\right]\right) \\
    &\ne \emptyset.
\end{align*}
Let $y$ be a point in this non-empty intersection $\cap_{i=1}^{2k+2}B_{S^n}(x_i;\pi-\delta-\varepsilon)$.
If we increase the radius of each ball by $\varepsilon$, then we deduce that
\[
B_{S^n}(y;\varepsilon) \subseteq \bigcap_{i=1}^{2k+2}B_{S^n}(x_i;\pi-\delta).
\]
Since $X$ is $\varepsilon$-dense, there is some point $x \in X$ with 
$x \in \cap_{i=1}^{2k+2}B_{X}(x_i;\pi-\delta)$, {where we now consider balls in $X$ instead of balls in $S^n$.}
Thus, $\cech{X}{\pi-\delta}$ is $(2k+2)$-conic.
Since $X$ is finite, we use Barmak's theorem~\cite{farber2023large} to conclude that $\conn(\cech{X}{\pi-\delta}) \ge k$.

Now, for any $i \le k$, let us consider a continuous map $f\colon S^i \to \cech{S^n}{\pi-\delta}$.
To show that $\conn(\cech{S^n}{\pi-\delta})\ge k$, we show that $f$ is null-homotopic, and hence $\pi_i(\cech{S^n}{\pi-\delta})$ is trivial for $i\le k$.
We first note that the image $\text{Im}(f)$ is compact since $S^i$ is compact.
Thus, there exists a subcomplex $\cech{X}{\pi-\delta} \subseteq \cech{S^n}{\pi-\delta}$ with a finite vertex set $X$ such that $\text{Im}(f)\subseteq \cech{X}{\pi-\delta}$; {see, for instance,~\cite[Proposition~A.1]{Hatcher}.}
Without loss of generality, $X$ can be assumed to be $\varepsilon$-dense in $S^n$ because, if needed, finitely many more vertices can be added to make $X$ sufficiently dense.
Since $\conn(\cech{X}{\pi-\delta})\ge k$, we have $\pi_i(\cech{X}{\pi-\delta}) = 0$ for all $i \le k$, and thus $f$ is null-homotopic in $\cech{X}{\pi-\delta}$.
Let $H\colon S^i\times I \to \cech{X}{\pi-\delta}$ be a null-homotopy for $f$.
If $j\colon \cech{X}{\pi-\delta}\hookrightarrow\cech{S^n}{\pi-\delta}$ is the inclusion induced by the inclusion $X\hookrightarrow S^n$, then the composition
\[
    j \circ H\colon S^i \times I \to \cech{S^n}{\pi-\delta}
\]
defines a null-homotopy of $f$ in $\cech{S^n}{\pi-\delta}$.
Hence $\pi_i(\cech{S^n}{\pi-\delta}) = 0$ for all $i \le k$, and $\conn(\cech{S^n}{\pi-\delta})\ge k$.
\end{proof}

We note that $\cov_{S^n}(2k+2)$ is non-increasing in $k$, and that $\cov_{S^n}(2k+2)\to 0$ as $k\to \infty$.

\begin{remark}
Our proof technique showing that $\cech{X}{\pi-\delta}$ is $(2k+2)$-conic in fact shows something stronger, namely that any subcomplex $L$ of $\cech{X}{\pi-\delta}$ with at most $2k+2$ vertices is contained in a simplex.
Does this mean that there is potentially room for improvement to prove a stronger result than Proposition~\ref{prop:fintoinf}?

See also Section~\ref{ssec:checking-lower}, which shows that Proposition~\ref{prop:fintoinf} is not tight in the case of the circle ($n=1$).
\end{remark}

We can reinterpret the result of Proposition~\ref{prop:fintoinf} to get an explicit lower bound on $\conn(\cech{S^n}{\pi-\delta})$.

\begin{corollary}
\label{cor:lowerbound2}
For each $0<\delta <\pi$, we have that 
\[
\tfrac{1}{2}\numcov_{S^n}(\delta) - 2 \le \conn(\cech{S^n}{\pi-\delta}).
\]
\end{corollary}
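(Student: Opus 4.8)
The plan is to derive Corollary~\ref{cor:lowerbound2} as a direct reformulation of Proposition~\ref{prop:fintoinf}, translating the statement about the covering radius $\cov_{S^n}$ into one about the covering number $\numcov_{S^n}$. The key observation is that these two quantities are inverse to one another in a suitable sense: $\cov_{S^n}(k)$ is the smallest radius for which $k$ closed balls suffice to cover $S^n$, while $\numcov_{S^n}(\delta)$ is the smallest number of closed balls of radius $\delta$ needed to cover $S^n$. Since $S^n$ is compact, both infima/minima are attained, and I expect the precise relationship to be an equivalence of the form: for integers $m$ and reals $\delta>0$, one has $\delta<\cov_{S^n}(m)$ if and only if $\numcov_{S^n}(\delta)>m$ (i.e.\ $m$ closed balls of radius $\delta$ cannot cover $S^n$).

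First I would fix $0<\delta<\pi$ and set $k=\bigl\lceil\tfrac{1}{2}\numcov_{S^n}(\delta)\bigr\rceil-1$, or more carefully choose $k$ to be the largest integer with $2k+2\le\numcov_{S^n}(\delta)$, so that $k=\bigl\lceil\tfrac{1}{2}\numcov_{S^n}(\delta)\bigr\rceil-1$. With this choice, $2k+2\le\numcov_{S^n}(\delta)$ means that $2k+2$ closed balls of radius $\delta$ cannot cover $S^n$, which by the inverse relationship above gives $\delta<\cov_{S^n}(2k+2)$. Then Proposition~\ref{prop:fintoinf} applies directly to yield $\conn(\cech{S^n}{\pi-\delta})\ge k$. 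Finally I would unwind the definition of $k$: since $k\ge\tfrac{1}{2}\numcov_{S^n}(\delta)-2$ (using $k=\lceil\tfrac{1}{2}\numcov_{S^n}(\delta)\rceil-1\ge\tfrac{1}{2}\numcov_{S^n}(\delta)-1\ge\tfrac{1}{2}\numcov_{S^n}(\delta)-2$), the desired inequality $\tfrac{1}{2}\numcov_{S^n}(\delta)-2\le\conn(\cech{S^n}{\pi-\delta})$ follows.

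The main obstacle I anticipate is handling the boundary case carefully, namely the interplay between the strict inequality $\delta<\cov_{S^n}(2k+2)$ required by Proposition~\ref{prop:fintoinf} and the non-strict covering condition built into $\numcov_{S^n}(\delta)$, which uses \emph{closed} balls of radius exactly $\delta$. I would need to verify that $2k+2\le\numcov_{S^n}(\delta)$ indeed forces the strict inequality $\delta<\cov_{S^n}(2k+2)$ rather than merely $\delta\le\cov_{S^n}(2k+2)$; this should hold because if $2k+2$ closed $\delta$-balls cannot cover $S^n$, then by compactness the optimal covering radius using $2k+2$ balls is strictly larger than $\delta$. The verification that the infimum defining $\cov_{S^n}$ is attained (cited as \cite[Lemma~3.1]{ABV}) is exactly what makes this strict-versus-non-strict bookkeeping go through cleanly.
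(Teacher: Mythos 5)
Your overall strategy---turning Proposition~\ref{prop:fintoinf} from a statement about covering radii into one about covering numbers via attainment of the infimum---is the right one, and your stated equivalence $\delta<\cov_{S^n}(m)\Longleftrightarrow\numcov_{S^n}(\delta)>m$ is correct. However, the way you instantiate it contains a genuine off-by-one gap. You choose $k$ to be the largest integer with $2k+2\le\numcov_{S^n}(\delta)$ and then assert that this ``means that $2k+2$ closed balls of radius $\delta$ cannot cover $S^n$.'' That implication is false when equality holds: if $2k+2=\numcov_{S^n}(\delta)$, then by the very definition of the covering number, $2k+2$ closed balls of radius $\delta$ \emph{do} cover $S^n$, hence $\cov_{S^n}(2k+2)\le\delta$ and the hypothesis $\delta<\cov_{S^n}(2k+2)$ of Proposition~\ref{prop:fintoinf} fails. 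With your choice of $k$, equality holds exactly when $\numcov_{S^n}(\delta)$ is even, so the argument breaks in those cases. (Note also that your two descriptions of $k$ disagree: for odd $m=\numcov_{S^n}(\delta)$, the largest $k$ with $2k+2\le m$ is $\lfloor m/2\rfloor-1$, not $\lceil m/2\rceil-1$; with the ceiling formula the condition $2k+2\le m$ fails outright.) Your closing paragraph does not repair this, because it again starts from the premise ``if $2k+2$ closed $\delta$-balls cannot cover $S^n$,'' which is precisely what is unavailable at equality; no compactness argument can rescue it, since at equality the conclusion $\delta<\cov_{S^n}(2k+2)$ is simply false.

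The fix is small and uses the slack built into the target inequality: take $k$ to be the largest integer with $2k+2\le\numcov_{S^n}(\delta)-1$, i.e.\ with $\numcov_{S^n}(\delta)>2k+2$ \emph{strictly}. Then your equivalence (this is where attainment, \cite[Lemma~3.1]{ABV}, is used) gives $\delta<\cov_{S^n}(2k+2)$, Proposition~\ref{prop:fintoinf} yields $\conn(\cech{S^n}{\pi-\delta})\ge k$, and
\[
k=\left\lfloor\tfrac{\numcov_{S^n}(\delta)-3}{2}\right\rfloor\ \ge\ \tfrac{1}{2}\numcov_{S^n}(\delta)-2,
\]
which is exactly the claim; the degenerate case $\numcov_{S^n}(\delta)=2$, where no such $k\ge 0$ exists, is trivial because the bound there reads $\conn\ge-1$. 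For comparison, the paper sidesteps this strict-versus-non-strict bookkeeping entirely by arguing contrapositively: it sets $\conn(\cech{S^n}{\pi-\delta})=k-1$, so the contrapositive of Proposition~\ref{prop:fintoinf} gives the non-strict inequality $\cov_{S^n}(2k+2)\le\delta$, and attainment of the infimum then yields $\numcov_{S^n}(\delta)\le 2k+2$ directly, with no parity or boundary cases to check.
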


\begin{proof}
Let $\conn(\cech{S^n}{\pi-\delta})=k-1$.
By Proposition~\ref{prop:fintoinf}, we have $\cov_{S^n}(2k+2)\le \delta$.
Since the infimum in the definition of the covering radius is realized (see~\cite[Lemma~3.1]{ABV}), this gives $\numcov_{S^n}(\delta)\le 2k+2$, which means $\frac{1}{2}\numcov_{S^n}(\delta)\le k+1=\conn(\cech{S^n}{\pi-\delta})+2$.
\end{proof}

\section{An upper bound on connectivity}
\label{sec:chromatic}

We now give an upper bound on the connectivity of \v{C}ech complexes of spheres.
We begin by noting that the approach used to upper bound the connectivity of Vietoris--Rips complexes of spheres will not work in the \v{C}ech case.
Indeed, the proof of~\cite[Claim~4.2]{ABV} constructed an odd map from $\vr{S^n}{\pi-\delta}$ into the Euclidean space $\R$ that missed the origin.
This is possible only since $\vr{S^n}{\pi-\delta}$ contains no edges between antipodal vertices of $S^n$ for $\delta>0$.
Unfortunately, the \v{C}ech complex $\cech{S^n}{\pi-\delta}$ does contain all antipodal edges for $\delta<\frac{\pi}{2}$, which is the regime of interest.

Therefore, we upper bound the connectivity of \v{C}ech complexes of spheres via a different approach, namely by relating this connectivity to the chromatic numbers of Borsuk graphs.
Let $\bor{S^n}{\pi-\delta}$ be the Borsuk graph whose vertex set is $S^n$ and whose edges are all pairs of vertices at a distance \emph{more than} $\pi-\delta$.
The basis for our connection is the observation that the neighborhood complex of the Borsuk graph $\bor{S^n}{\delta}$ is the same as the \v{C}ech complex $\cech{S^n}{\pi-\delta}$.
Indeed, for $\sigma\subseteq S^n$ finite, we have that
\begin{align*}
\sigma\in N(\bor{S^n}{\delta})
\Longleftrightarrow\ & \exists \hspace{1mm} v\in S^n \text{ with }x\sim v\text{ in }\bor{S^n}{\delta} \text{ for all } x\in\sigma \\
\Longleftrightarrow\ & \exists \hspace{1mm}v\in S^n \text{ with }d(x,v)> \delta \text{ for all } x\in\sigma \\
\Longleftrightarrow\ & \exists -v\in S^n \text{ with }d(x,-v)<\pi-\delta \text{ for all } x\in\sigma \\
\Longleftrightarrow\ & \sigma \in \cech{S^n}{\pi-\delta}.
\end{align*}
So, $N(\bor{S^n}{\delta})=\cech{S^n}{\pi-\delta}$.

Lov{\'a}sz's famous result~\cite[Theorem~2]{Lovasz1978} says for any graph $G$, the chromatic number satisfies $\chi(G)\ge \conn(N(G))+3$, where $N(G)$ is the neighborhood complex of $G$.

\begin{ex}
Let us illustrate Lov{\'a}sz's result for the Petersen graph.
The Petersen graph $\P$ is the Kneser graph $KG_{5,2}$, whose vertices are $2$-element subsets of a $5$-element set and two of them are joined by an edge if and only if they are disjoint.
The degree of each vertex of $\P$ is $3$, and therefore the dimension of the neighborhood complex $N(\P)$ is $2$.
Each vertex of $\P$ gives rise to a unique triangle in $N(\P)$, and no two triangles in $N(\P)$ share an edge.
Also, each vertex of $N(\P)$ is contained in exactly $3$ such triangles in $N(\P)$.
It turns out that $N(\P)$ is homotopy equivalent to a finite connected graph with holes, meaning that $N(\P)$ is not simply connected and $\conn(N(\P))=0$.
Lov{\'a}sz's bound~\cite[Theorem~2]{Lovasz1978} gives $\chi(\P)\ge \conn(N(\P))+3=0+3$, and in fact, $\chi(\P)=3$ by~\cite[Theorem~1']{Lovasz1978}.
\end{ex}

Using Lov{\'a}sz's result and the fact that $N(\bor{S^n}{\delta})=\cech{S^n}{\pi-\delta}$ for each $n\ge 1$ and $\delta \in (0,\pi)$, we obtain the following bound.

\begin{lemma}
\label{lem:lovasz}
For $0<\delta<\pi$, we have
\[\chi(\bor{S^n}{\delta})
\ge \conn(N(\bor{S^n}{\delta}))+3
= \conn(\cech{S^n}{\pi-\delta})+3.\]
\end{lemma}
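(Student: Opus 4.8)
The plan is to treat the claimed equality and the claimed inequality separately; the equality is purely formal, while the inequality is a single application of Lov\'asz's theorem whose only delicate point is that the graph in play is infinite.

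The equality $\conn(N(\bor{S^n}{\delta}))+3 = \conn(\cech{S^n}{\pi-\delta})+3$ requires no new work. In the paragraph preceding the lemma it was shown that, for every finite $\sigma\subseteq S^n$, the antipodal substitution $v\mapsto -v$ together with the equivalence $d(x,v)>\delta\iff d(x,-v)<\pi-\delta$ gives $\sigma\in N(\bor{S^n}{\delta})$ if and only if $\sigma\in\cech{S^n}{\pi-\delta}$. Hence these are literally the same simplicial complex, so $\conn(N(\bor{S^n}{\delta}))=\conn(\cech{S^n}{\pi-\delta})$ and the equality follows by adding $3$ to both sides.

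For the inequality $\chi(\bor{S^n}{\delta})\ge\conn(N(\bor{S^n}{\delta}))+3$, I would apply Lov\'asz's bound $\chi(G)\ge\conn(N(G))+3$ \cite{Lovasz1978} to $G=\bor{S^n}{\delta}$. The main obstacle is that Lov\'asz's statement is classically phrased for finite graphs, whereas $\bor{S^n}{\delta}$ has all of $S^n$ as its vertex set. I would first record that the chromatic number is nonetheless finite: a color class is exactly an independent set, an independent set is precisely a subset of $S^n$ of diameter at most $\delta$, and every closed ball of radius $\delta/2$ has diameter at most $\delta$; thus a minimal cover of $S^n$ by closed balls of radius $\delta/2$ yields a proper coloring, whence $\chi(\bor{S^n}{\delta})\le\numcov_{S^n}(\delta/2)<\infty$.

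Finiteness of $\chi$ alone does not justify the inequality, so to finish I would appeal to the topological character of Lov\'asz's argument, which never uses finiteness of the vertex set. Setting $c=\chi(\bor{S^n}{\delta})$, a proper coloring is a graph homomorphism $\bor{S^n}{\delta}\to K_c$ and induces a $\Z_2$-map of box complexes into $B(K_c)\simeq_{\Z_2}S^{c-2}$; together with the non-equivariant homotopy equivalence $B(\bor{S^n}{\delta})\simeq N(\bor{S^n}{\delta})$ and the Borsuk--Ulam inequalities $\chi\ge\mathrm{ind}_{\Z_2}(B)+2$ and $\mathrm{ind}_{\Z_2}(B)\ge\conn(B)+1$, this chain gives $\chi(\bor{S^n}{\delta})\ge\conn(N(\bor{S^n}{\delta}))+3$. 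Each step is valid for an arbitrary, possibly infinite, simplicial complex, so the bound transfers verbatim to the infinite Borsuk graph. The one place where I expect genuine care to be needed is confirming that the connectivity-to-index inequality and the identification $B(K_c)\simeq_{\Z_2}S^{c-2}$ survive without a finiteness hypothesis.
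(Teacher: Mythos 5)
Your treatment of the equality is correct and is exactly the paper's: by the computation in the paragraph preceding the lemma, $N(\bor{S^n}{\delta})$ and $\cech{S^n}{\pi-\delta}$ are the same simplicial complex, so only the inequality carries content. For the inequality, the paper's proof is nothing more than that identification together with a direct citation of Lov\'asz's theorem $\chi(G)\ge\conn(N(G))+3$, which is invoked for an \emph{arbitrary} graph $G$ \cite{Lovasz1978}; Lov\'asz's own argument works directly with the neighborhood complex (via the Galois poset of common-neighborhood-closed sets, order homotopy, and the Borsuk--Ulam theorem) rather than with box complexes, and Borsuk graphs were in fact part of his motivation. You instead re-derive the bound through box complexes, and that is where your proposal has a genuine gap.

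Your chain requires the non-equivariant homotopy equivalence $\cB(\bor{S^n}{\delta})\simeq N(\bor{S^n}{\delta})$, which you use without comment. That equivalence is established in the literature only for \emph{finite} graphs, and this paper states explicitly in Section~\ref{sec:hdim} that it is not known whether the box complex of an infinite graph retains the properties listed there for finite graphs --- property (a) being precisely $\cB(G)\simeq N(G)$; this is the very reason the homological-dimension results of that section are restricted to finite subsets $X\subseteq S^n$. So your sentence ``each step is valid for an arbitrary, possibly infinite, simplicial complex'' asserts exactly the point that needs proof. You also flag the wrong steps as the delicate ones: $\cB(K_c)\simeq_{\Z_2}S^{c-2}$ concerns only the finite graph $K_c$ and needs no care, and the inequality $\mathrm{ind}_{\Z_2}(Y)\ge\conn(Y)+1$ does extend to arbitrary free $\Z_2$-CW complexes by the usual skeleton-by-skeleton equivariant extension argument; it is the $\cB\simeq N$ step that is unjustified. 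To repair the proof, either cite Lov\'asz's theorem for arbitrary graphs as the paper does (or verify that his original neighborhood-complex argument never uses finiteness of the vertex set), or prove $\cB(\bor{S^n}{\delta})\simeq N(\bor{S^n}{\delta})$ for this infinite graph --- which would answer a question the paper deliberately leaves open. Finally, your observation that $\chi(\bor{S^n}{\delta})\le\numcov_{S^n}(\delta/2)<\infty$ is correct but unnecessary: if the chromatic number were infinite, the claimed inequality would hold vacuously.
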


\begin{figure}[bht]
\begin{center}
\includegraphics[width=2in]{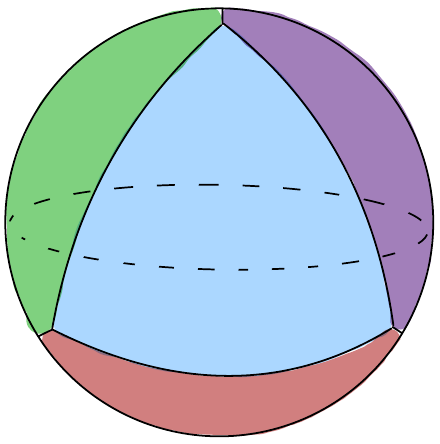}
\end{center}
\caption{We have $\chi(\bor{S^2}{\delta})=4$ for all $s_2<\delta<\pi$, as illustrated by the covering of $S^2$ by four sets each of diameter $s_2$.}
\label{fig:Sphere4coloring}
\end{figure}

The problem of bounding $\chi(\bor{S^n}{\delta})$ for $0<\delta<\pi$ close to $\pi$ is classical.
One version of the Borsuk--Ulam theorem (the Lyusternik--Schnirel'man--Borsuk covering theorem) gives, and is in fact equivalent to, the statement that $\chi(\cbor{S^n}{\delta})\ge n+2$ for all $\delta<\pi$; see, for example,~\cite{borsuk1933drei,lovasz1983self,barany1978short} and~\cite[Section~2.1, Exercise~10]{matousek2003using}.
Taking $\varepsilon > 0$ such that $\delta + \varepsilon < \pi$, the left inclusion in~\eqref{eq:closeopencontainment} gives that $\chi(\bor{S^n}{\delta})\ge n+2$, as well.
Furthermore, Raigorodskii shows that $\chi(\cbor{S^n}{\delta})=n+2$ for all $s_n<\delta<\pi$, where $s_n$ is the diameter of the radial projection of a single $n$-dimensional face of a regular $(n+1)$-dimensional simplex inscribed in $S^n$~\cite{raigorodskii2010chromatic, raigorodskii2012chromatic}.
Once again, \eqref{eq:closeopencontainment} implies that $\chi(\bor{S^n}{\delta})=n+2$ for all $s_n<\delta<\pi$; see Figure~\ref{fig:Sphere4coloring} for the case $n=2$.
From~\cite{santalo1946convex} (after plugging $\cos \ell=-\frac{1}{n+1}$ into (2.17) and (2.18) of~\cite{santalo1946convex}), we have that
\[s_n=\begin{cases} 
\arccos{\left(-\frac{n+1}{n+3}\right)} & \text{if }n \text{ is odd} \\
\arccos{\left(-\sqrt{\frac{n}{n+4}}\right)} &  \text{if }n \text{ is even}.
\end{cases}\]

The asymptotics of the chromatic numbers of Borsuk graphs of $S^n$ (though not the exact values) may be reasonably well controlled by covering and packing numbers on spheres.
See, for example,~\cite[Theorem 5.1.5]{MoyPhD}\footnote{
Moy defines his covering radius $c_{n,m}$ using balls of radius $\frac{r}{2}$, whereas we define our covering radius $\cov_{S^n}(m)$ using balls of radius $r$, giving the relationship $c_{n,m}=2\cov_{S^n}(m)$.},
which states that when $\delta > 2\cov_{S^n}(m)$ for some $m \ge 2$, then $\chi(\cbor{S^n}{\delta}) \le m$.
Since $\bor{S^n}{\delta}\subseteq \cbor{S^n}{\delta}$, we have that $\chi(\bor{S^n}{\delta}) \le m$.

\begin{proposition}
\label{prop:upper}
If $\delta > 2\cov_{S^n}(k+1)$ for some $k \ge 1$, then $\conn(\cech{S^n}{\pi-\delta}) \le k-2$.
\end{proposition}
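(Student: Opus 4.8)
The plan is to combine Lov{\'a}sz's lower bound on the chromatic number (Lemma~\ref{lem:lovasz}) with the covering-radius upper bound on $\chi(\cbor{S^n}{\delta})$ recalled immediately before the statement. Since Lemma~\ref{lem:lovasz} reads $\chi(\bor{S^n}{\delta}) \ge \conn(\cech{S^n}{\pi-\delta})+3$, it rearranges to $\conn(\cech{S^n}{\pi-\delta}) \le \chi(\bor{S^n}{\delta})-3$. Thus it suffices to bound $\chi(\bor{S^n}{\delta}) \le k+1$, which then yields $\conn(\cech{S^n}{\pi-\delta}) \le (k+1)-3 = k-2$, exactly as claimed.

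First I would set $m=k+1$. The hypothesis $k\ge 1$ guarantees $m\ge 2$, which is precisely the constraint needed to invoke Moy's bound~\cite[Theorem~5.1.5]{MoyPhD}: since $\delta>2\cov_{S^n}(k+1)=2\cov_{S^n}(m)$, that result gives $\chi(\cbor{S^n}{\delta})\le m=k+1$.

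Next I would pass from the closed Borsuk graph to the open one. Because $\bor{S^n}{\delta}\subseteq\cbor{S^n}{\delta}$ by~\eqref{eq:closeopencontainment} — the open graph shares the vertex set $S^n$ but has a subset of the edges — every proper coloring of $\cbor{S^n}{\delta}$ restricts to a proper coloring of $\bor{S^n}{\delta}$, so $\chi(\bor{S^n}{\delta})\le\chi(\cbor{S^n}{\delta})\le k+1$. Feeding this into Lemma~\ref{lem:lovasz} completes the argument.

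I do not expect a serious obstacle here, as the proof is essentially a two-line assembly of results already collected in this section. The one point requiring care is that $\bor{S^n}{\delta}$ is an \emph{infinite} graph, so both the monotonicity of the chromatic number under subgraph inclusion and the validity of Lov{\'a}sz's inequality must be used in the infinite setting; the former is immediate from the definition of a proper coloring (fewer edges impose fewer constraints), and the latter is already asserted for the Borsuk graph in Lemma~\ref{lem:lovasz}, so I would simply cite it rather than reprove it. A secondary bookkeeping point is checking that the index shift aligns the hypothesis $k\ge 1$ with the constraint $m\ge 2$ in Moy's theorem, which it does under the substitution $m=k+1$.
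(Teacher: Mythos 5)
Your proposal is correct and follows essentially the same route as the paper's proof: rearrange Lov\'asz's inequality from Lemma~\ref{lem:lovasz} to get $\conn(\cech{S^n}{\pi-\delta}) \le \chi(\bor{S^n}{\delta}) - 3$, then bound $\chi(\bor{S^n}{\delta}) \le k+1$ via Moy's theorem for the closed Borsuk graph together with the containment $\bor{S^n}{\delta}\subseteq\cbor{S^n}{\delta}$. Your explicit attention to the closed-to-open passage and the infinite-graph setting makes the write-up slightly more careful than the paper's, which cites these points in the paragraph preceding the proposition rather than inside the proof itself.
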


{We recall that $\delta\in (0,\pi)$ in this paper. This excludes the possibility $k=1$ in Proposition~\ref{prop:upper}, which means that the upper bound on $\conn(\cech{S^n}{\pi-\delta})$ given above is always non-negative. 
Indeed, if $k=1$, then $2\cov_{S^n}(k+1)=\pi$ and we get the contradicting inequality $\pi-\delta<0$.}
%

\begin{proof}
Lov{\'a}sz's upper bound in Lemma~\ref{lem:lovasz} gives
$\conn(\cech{S^n}{\pi-\delta}) \le \chi(\bor{S^n}{\delta}) - 3$.
When $\delta > 2\cov_{S^n}(k+1)$ for some $k+1 \ge 2$, then $\chi(\bor{S^n}{\delta}) \le k+1$ by~\cite[Theorem 5.1.5]{MoyPhD}.
Together, these give
\[\conn(\cech{S^n}{\pi-\delta}) \le \chi(\bor{S^n}{\delta}) - 3 \le k-2.\]
\end{proof}

We note that $2\cov_{S^n}(k+1)$ is non-increasing in $k$, and that $2\cov_{S^n}(k+1)\to 0$ as $k\to \infty$.
So, for $\delta>0$, there always exists some integer $k$ such that $\delta>2\cov_{S^n}(k+1)$.

Propositions~\ref{prop:fintoinf} and~\ref{prop:upper} together prove Theorem~\ref{thm:main}.

We can reinterpret the result of Proposition~\ref{prop:upper} to get an explicit upper bound on $\conn(\cech{S^n}{\pi-\delta})$.

\begin{corollary}\label{cor:upperbound2}
For each $0<\delta <\pi$, we have that 
\[
\conn(\cech{S^n}{\pi-\delta})\le \numcov_{S^n}\left(\tfrac{\delta}{2}\right)-2.
\]
\end{corollary}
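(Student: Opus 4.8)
The plan is to deduce Corollary~\ref{cor:upperbound2} from Proposition~\ref{prop:upper} by the same kind of rearrangement used to pass from Proposition~\ref{prop:fintoinf} to Corollary~\ref{cor:lowerbound2}. The key is to translate the hypothesis ``$\delta > 2\cov_{S^n}(k+1)$'' of the proposition into a statement about the covering number $\numcov_{S^n}(\delta/2)$, and then read off the bound on connectivity. Concretely, I would fix $\delta\in(0,\pi)$ and set $m=\numcov_{S^n}(\delta/2)$, so that by definition $S^n$ can be covered by $m$ closed balls of radius $\delta/2$. This is exactly the statement that $\cov_{S^n}(m)\le \delta/2$, equivalently $2\cov_{S^n}(m)\le\delta$.

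The main subtlety is that Proposition~\ref{prop:upper} requires a \emph{strict} inequality $\delta > 2\cov_{S^n}(k+1)$, whereas the definition of $m=\numcov_{S^n}(\delta/2)$ only yields the non-strict $2\cov_{S^n}(m)\le\delta$. To fix this I would use the monotonicity of $\cov_{S^n}$: since $m$ is the \emph{minimal} number of closed balls of radius $\delta/2$ covering $S^n$, no collection of $m-1$ such balls covers $S^n$, which gives $\cov_{S^n}(m-1) > \delta/2$, i.e.\ $2\cov_{S^n}(m-1)>\delta$. What I actually need, though, is a strict inequality \emph{below} $\delta$. I would obtain this by invoking that $2\cov_{S^n}(m)\le\delta$ together with the fact (noted right after Proposition~\ref{prop:upper}) that $2\cov_{S^n}(\cdot+1)$ is non-increasing and tends to $0$; setting $k+1=m+1$, i.e.\ $k=m$, one has $2\cov_{S^n}(k+1)=2\cov_{S^n}(m+1)\le 2\cov_{S^n}(m)\le\delta$. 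To upgrade this to a strict inequality I would pick the covering number at a slightly smaller radius and exploit that $\numcov$ is right-continuous/monotone, or simply apply Proposition~\ref{prop:upper} with $k=m$ after observing $2\cov_{S^n}(m+1)<\delta$ whenever the cover at radius $\delta/2$ with $m$ balls is realized with some slack; the compactness of $S^n$ (so that the covering radius is attained, \cite[Lemma~3.1]{ABV}) is what makes this clean.

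Taking $k=m=\numcov_{S^n}(\delta/2)$, Proposition~\ref{prop:upper} then yields $\conn(\cech{S^n}{\pi-\delta})\le k-2 = \numcov_{S^n}(\delta/2)-2$, which is precisely the claimed bound. So the skeleton of the argument is: (i) set $m=\numcov_{S^n}(\delta/2)$ and record $2\cov_{S^n}(m)\le\delta$; (ii) argue the needed strict inequality $2\cov_{S^n}(m+1)<\delta$ so that the hypothesis of Proposition~\ref{prop:upper} holds with $k=m$; (iii) conclude $\conn(\cech{S^n}{\pi-\delta})\le m-2$.

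The step I expect to require the most care is (ii), the strict-versus-non-strict inequality bookkeeping. Proposition~\ref{prop:upper} is phrased with a strict ``$>$'', and matching it against the definition of $\numcov$ (which is phrased with closed balls and a non-strict cover) needs either an $\varepsilon$-perturbation of the radius or an appeal to the fact that increasing the number of balls from $m$ to $m+1$ strictly decreases the covering radius (generically) — or, more robustly, a limiting argument letting $\delta'\uparrow\delta$. Everything else is a direct substitution into the already-proved Proposition~\ref{prop:upper}, mirroring the proof of Corollary~\ref{cor:lowerbound2}.
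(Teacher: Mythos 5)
Your proposal is the paper's own argument run forwards rather than backwards, so the approach matches: the paper assumes $\conn(\cech{S^n}{\pi-\delta})=k-1$, applies the contrapositive of Proposition~\ref{prop:upper} to get $\delta\le 2\cov_{S^n}(k+1)$, and then converts this into $k+1\le\numcov_{S^n}(\delta/2)$; you instead set $m=\numcov_{S^n}(\delta/2)$, record $2\cov_{S^n}(m)\le\delta$, and try to apply Proposition~\ref{prop:upper} with $k=m$. Your steps (i) and (iii) are exactly this rearrangement and are fine.

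The problem is that step (ii) is genuinely unresolved, and none of the patches you list closes it. The bad case is a ``plateau'' $\cov_{S^n}(m)=\cov_{S^n}(m+1)=\delta/2$: then $2\cov_{S^n}(m+1)\le\delta$ holds only with equality and Proposition~\ref{prop:upper} never applies with $k=m$. Your suggested repairs fail as stated: strict monotonicity of $k\mapsto\cov_{S^n}(k)$ is false in general (by the Lyusternik--Schnirel'man--Borsuk theorem, $\cov_{S^n}(2)=\cdots=\cov_{S^n}(n+1)=\pi/2$ for $n\ge 2$, so plateaus do occur and ``generically'' is not an argument); appealing to monotonicity or right-continuity of $\numcov$ is circular, since exhibiting a cover by $m+1$ balls of radius strictly less than $\delta/2$ is precisely the claim $\cov_{S^n}(m+1)<\delta/2$ you need; and letting $\delta'\uparrow\delta$ accomplishes nothing because the complex $\cech{S^n}{\pi-\delta}$ whose connectivity you are bounding does not vary with $\delta'$. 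In fairness, the paper's own proof makes the identical jump --- its step ``$\delta/2\le\cov_{S^n}(k+1)$ gives $k+1\le\numcov_{S^n}(\delta/2)$'' also fails on a plateau at value exactly $\delta/2$ --- so you have reproduced the paper's proof, gap included. The clean repair bypasses $\cov_{S^n}$ entirely and uses the open Borsuk convention: take a cover of $S^n$ by $m=\numcov_{S^n}(\delta/2)$ closed balls of radius $\delta/2$ and color each point of $S^n$ by the least index of a ball containing it; two points of the same color lie in a common closed ball, hence are at distance at most $\delta$, hence are \emph{not} adjacent in $\bor{S^n}{\delta}$ (whose edges require distance strictly greater than $\delta$). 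Thus $\chi(\bor{S^n}{\delta})\le m$, and Lemma~\ref{lem:lovasz} gives $\conn(\cech{S^n}{\pi-\delta})\le\chi(\bor{S^n}{\delta})-3\le m-3\le m-2$, which proves the corollary (indeed with the slightly stronger constant $-3$).
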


\begin{proof}
Let $\conn(\cech{S^n}{\pi-\delta})=k-1$.
The contrapositive of Proposition~\ref{prop:upper} gives $\delta\le 2\cov_{S^n}(k+1)$, or $\frac{\delta}{2}\le \cov_{S^n}(k+1)$.
This gives $k+1\le \numcov_{S^n}(\tfrac{\delta}{2})$, which means $k-1\le \numcov_{S^n}(\tfrac{\delta}{2})-2$.
\end{proof}

Corollaries~\ref{cor:lowerbound2} and~\ref{cor:upperbound2} together prove Corollary~\ref{cor:main}.

\section{The case of the circle}
\label{sec:circle}

In the case of the circle, we know the homotopy types of $\cech{S^1}{r}$ for all $r$; see~\cite[Section~9]{AA-VRS1}.
This gives us one ``data point'' ($n=1$) that any more general results for $\cech{S^n}{r}$ with $n\ge 1$ will need to generalize.
See Figure~\ref{fig:barsS1}.

By~\cite[Theorem 9.8]{AA-VRS1}, the homotopy types of $\cech{S^1}{r}$ are all odd spheres.
It turns out that
\begin{equation}
\label{eq:S1cech}
\cech{S^1}{r} \simeq S^{2k+1} \hspace{5mm} \text{if} \hspace{5mm} r \in \left(\frac{\pi k}{k+1},\frac{\pi (k+1)}{k+2}\right],
\end{equation}
with $\cech{S^1}{r}$ contractible for all $r\ge \pi$.
From this, we get that
\begin{equation}
\label{eq:S1conn}
\conn(\cech{S^1}{\pi-\delta}) = 2k \hspace{5mm} \text{if} \hspace{5mm} \delta \in \left[\frac{\pi}{k+2},\frac{\pi}{k+1}\right).
\end{equation}

\subsection{Comparing the lower bound}
\label{ssec:checking-lower}

We check the lower bound to $\conn(\cech{S^n}{\pi-\delta})$ obtained in Section~\ref{sec:lower} in the case $n=1$.

From Proposition~\ref{prop:fintoinf}, we have that if $\conn(\cech{S^1}{\pi-\delta}) \le k-1$, then $\delta\ge\cov_{S^1}(2k+2)$.
Via the substitution $k\mapsto 2k+1$, this means that if $\conn(\cech{S^1}{\pi-\delta}) \le 2k$, then $\delta \ge \cov_{S^1}(4k+4) = \frac{\pi}{4k+4}$.
Using the exact connectivity values of $\cech{S^1}{\pi-\delta}$ in~\eqref{eq:S1conn}, we compare the lower bounds to $\delta$, namely $\frac{\pi}{4k+4}$ and $\frac{\pi}{k+2}$, and see that the bound on $\delta$ given by Proposition~\ref{prop:fintoinf} is off by a multiplicative factor of nearly $4$ for even connectivity when $n=1$.
Hence, the lower bound to $\conn(\cech{S^n}{\pi-\delta})$ from Corollary~\ref{cor:lowerbound2} is not sharp for $n=1$.

\begin{remark}
    From~\cite[Theorem 7.4]{AA-VRS1}, we have that 
    \[
\vr{S^1}{r} \simeq S^{2k+1} \hspace{5mm} \text{if} \hspace{5mm} r \in \left(\frac{2\pi k}{2k+1},\frac{2\pi (k+1)}{2k+3}\right].
\]
From this, we get that
\[
\conn(\vr{S^1}{\pi-\delta}) = 2k \hspace{5mm} \text{if} \hspace{5mm} \delta \in \left[\frac{\pi}{2k+3},\frac{\pi}{2k+1}\right).
\]
From~\cite[Claim 4.1]{ABV} in the case $n=1$, if $\conn(\vr{S^1}{\pi-\delta}) \le 2k$, then $\delta \ge \cov_{S^1}(4k+4) = \frac{\pi}{4k+4}$.
So, the lower bound on $\delta$ for Vietoris--Rips complexes of the circle is off by nearly a factor of $2$  for even connectivity.
It is interesting that Barmak's result, which is not specific to clique complexes, gives better lower bounds for $\conn(\vr{S^1}{\pi-\delta})$ than for $\conn(\cech{S^1}{\pi-\delta})$.
We do not yet have a conceptual understanding of why this should be the case.
\end{remark}

\subsection{Comparing the upper bound}
\label{sec:compareupperbound}
We now check the upper bound to $\conn(\cech{S^n}{\pi-\delta})$ obtained in Section~\ref{sec:chromatic} in the case $n=1$.
To do that, we first note the following.

\begin{lemma}
\label{lem:chromatic_num}
$\chi(\bor{S^1}{\delta}) = \bigl\lceil\frac{2\pi}{\delta}\bigr\rceil$ for any $0<\delta< \pi$.
\end{lemma}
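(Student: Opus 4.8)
The plan is to compute $\chi(\bor{S^1}{\delta})$ directly by thinking of $S^1$ as $\R/2\pi\Z$ with the geodesic metric, where two points are adjacent precisely when their geodesic distance exceeds $\delta$. I will establish the two inequalities separately. For the upper bound $\chi(\bor{S^1}{\delta}) \le \lceil\frac{2\pi}{\delta}\rceil$, set $m = \lceil\frac{2\pi}{\delta}\rceil$ and partition the circle into $m$ half-open arcs $A_0,\ldots,A_{m-1}$ of equal length $\frac{2\pi}{m} \le \delta$, coloring each point by the index of the arc containing it. Two points sharing a color lie in a common arc of length at most $\delta$, so their geodesic distance is strictly less than $\delta$ (strictly, since each arc is half-open and hence has length achieved only in the limit); in particular it is not more than $\delta$, so they are non-adjacent. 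This shows $m$ colors suffice. The one routine point to verify carefully is that the maximal geodesic separation of two points in a half-open arc of length $\frac{2\pi}{m}$ is strictly below $\delta$, which holds because $\frac{2\pi}{m}\le\delta$ and the supremum of the distance is not attained within a single half-open arc.

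For the lower bound $\chi(\bor{S^1}{\delta}) \ge \lceil\frac{2\pi}{\delta}\rceil$, I would exhibit an explicit finite subgraph requiring that many colors, or argue via the covering/fractional structure of the circle. The cleanest route is to place $m = \lceil\frac{2\pi}{\delta}\rceil$ equally spaced points $p_0,\ldots,p_{m-1}$ on the circle, so consecutive points are at distance $\frac{2\pi}{m}$ and the ``long'' chords between non-consecutive points exceed $\delta$. I would check that this configuration forms an odd cycle or, more generally, a graph whose chromatic number is exactly $m$: the key is that any color class, being an independent set, can only contain points that are pairwise within distance $\delta$, hence contained in an arc of length $\le \delta$, which can hold at most $\lfloor \frac{\delta \cdot m}{2\pi}\rfloor$ of the equally spaced points; summing over color classes forces at least $\lceil \frac{2\pi}{\delta}\rceil$ classes. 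Alternatively, and perhaps more robustly, I can invoke a counting/measure argument: each color class is an independent set, which must be contained in an arc of length at most $\delta$ (since any two same-colored points are at distance $\le\delta$, and an independent set in $\bor{S^1}{\delta}$ has diameter $\le\delta$), and arcs of length $\le\delta$ cannot cover $S^1$ with fewer than $\lceil\frac{2\pi}{\delta}\rceil$ pieces because the total circle length is $2\pi$.

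The main obstacle will be handling the boundary behavior created by the open ($>$) convention in the definition of $\bor{S^1}{\delta}$ together with the ceiling function, especially in the edge case where $\frac{2\pi}{\delta}$ is an integer. When $\frac{2\pi}{\delta} = m$ exactly, an independent set could in principle be an arc of length exactly $\delta$ (since points at distance exactly $\delta$ are \emph{not} adjacent under the open convention), so I must confirm that even allowing closed arcs of length exactly $\delta$, one still needs $m$ of them to cover the circle, and that the upper-bound coloring remains valid. I expect the independence-set-equals-short-arc characterization to be the crux: I would prove that every maximal independent set in $\bor{S^1}{\delta}$ is exactly a closed arc of diameter at most $\delta$, after which both inequalities follow from the elementary fact that covering a circle of circumference $2\pi$ by arcs of length $\le\delta$ requires exactly $\lceil\frac{2\pi}{\delta}\rceil$ arcs. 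The degenerate case $\delta \to \pi$ (where $\lceil\frac{2\pi}{\delta}\rceil = 3$, matching the known value $\chi = n+2 = 3$) serves as a useful consistency check.
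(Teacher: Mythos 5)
Your upper bound is fine: partitioning $S^1$ into $\lceil\frac{2\pi}{\delta}\rceil$ half-open arcs of length $\frac{2\pi}{m}\le\delta$ gives a proper coloring of $\bor{S^1}{\delta}$, since two points in a common such arc are at geodesic distance $<\delta$ and hence non-adjacent. The lower bound, however, has a genuine gap, and it sits exactly where the statement is hardest. Your crux claim --- that every independent set (equivalently, every color class) of $\bor{S^1}{\delta}$ is contained in an arc of length $\le\delta$ --- is false once $\delta\ge\frac{2\pi}{3}$: the three points $\{0,\frac{2\pi}{3},\frac{4\pi}{3}\}$ are pairwise at distance $\frac{2\pi}{3}\le\delta$, hence independent, yet the shortest arc containing them has length $\frac{4\pi}{3}>\delta$. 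A set of geodesic diameter $\le\delta$ in the circle can ``wrap around'' precisely when $\delta\ge\frac{2\pi}{3}$, so your arc-covering argument proves the lower bound only for $\lceil\frac{2\pi}{\delta}\rceil\ge 4$ and says nothing in the case $\lceil\frac{2\pi}{\delta}\rceil=3$, i.e.\ $\frac{2\pi}{3}\le\delta<\pi$. That remaining case, $\chi(\bor{S^1}{\delta})\ge 3$ for all $\delta<\pi$, is not an elementary covering fact: it is the $n=1$ instance of the Lyusternik--Schnirel'man--Borsuk statement, and needs e.g.\ an odd-cycle or Borsuk--Ulam argument (for any $\delta<\pi$ one can embed a long odd cycle in $\bor{S^1}{\delta}$ using $2k+1$ points with consecutive-in-the-cycle points nearly antipodal). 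Your alternative route via $m$ equally spaced points also fails: with $m=\lceil\frac{2\pi}{\delta}\rceil$ and $\frac{2\pi}{m}\le\delta<\frac{2\pi}{m-1}$, two of these points are adjacent iff their index separation is at least $2$, so the induced subgraph is the complement of the $m$-cycle, whose chromatic number is $\lceil\frac{m}{2}\rceil$ (for $m=3$ it is edgeless!), nowhere near $m$; relatedly, your count of grid points in an arc of length $\delta$ has a fencepost error ($\lfloor\frac{\delta m}{2\pi}\rfloor+1$, not $\lfloor\frac{\delta m}{2\pi}\rfloor$).

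For comparison, the paper does not reprove the circle computation at all: it quotes the known value $\chi(\cbor{S^1}{\delta})=\lceil\frac{2\pi}{\delta}\rceil$ for the \emph{closed} Borsuk graph from Moy's thesis, and then transfers it to the open graph via the sandwich $\cbor{S^1}{\delta+\varepsilon}\subseteq\bor{S^1}{\delta}\subseteq\cbor{S^1}{\delta}$, choosing $\varepsilon$ small enough that $\delta+\varepsilon$ stays in the interval $[\frac{2\pi}{k},\frac{2\pi}{k-1})$ on which the ceiling is constant. If you want a self-contained proof along your lines, you can keep your arc argument for $\delta<\frac{2\pi}{3}$, but you must add a separate Borsuk--Ulam-type (or odd-cycle) argument for $\frac{2\pi}{3}\le\delta<\pi$; alternatively, adopt the paper's route and inherit the closed-graph result, where the same wrap-around issue has already been dealt with.
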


\begin{proof}
We know that $\chi(\cbor{S^1}{\delta}) = \bigl\lceil\frac{2\pi}{\delta}\bigr\rceil$; see, for example,~\cite[Section 5.1.2]{MoyPhD}.
So, for $k\geq 2$, $\chi(\cbor{S^1}{\delta}) = k$ when $\tfrac{2\pi}{k} \leq \delta < \tfrac{2\pi}{k-1}$.
Let $\tfrac{2\pi}{k} \leq \delta < \tfrac{2\pi}{k-1}$, and choose $\varepsilon > 0$ such that $\delta + \varepsilon < \tfrac{2\pi}{k-1}$.
Then by~\eqref{eq:closeopencontainment}, $k=\chi(\cbor{S^1}{\delta+\varepsilon}) \leq \chi(\bor{S^1}{\delta}) \leq \chi(\cbor{S^1}{\delta})= k$.
Thus, we have that $\chi(\bor{S^1}{\delta}) = \chi(\cbor{S^1}{\delta})= \bigl\lceil\frac{2\pi}{\delta}\bigr\rceil$.
\end{proof}

Let us compare both Lov{\'a}sz's bound (Lemma~\ref{lem:lovasz}) and Proposition~\ref{prop:upper}, which give upper bounds on $\conn(\cech{S^n}{\pi-\delta})$, to the exact connectivity values when $n=1$.

\begin{itemize}
\item
Plugging $n=1$ into Lov{\'a}sz's bound $\conn(\cech{S^n}{\pi-\delta}) \le \chi(\bor{S^n}{\delta}) - 3$, and using the exact chromatic numbers of the Borsuk graphs $\bor{S^1}{\delta}$ from Lemma~\ref{lem:chromatic_num}, together give $\conn(\cech{S^1}{\pi-\delta}) \le \bigl\lceil\tfrac{2\pi}{\delta}\bigr\rceil - 3$.
If $\conn(\cech{S^1}{\pi-\delta}) = 2k$, then $2k+3 \le \bigl\lceil\tfrac{2\pi}{\delta}\bigr\rceil$, which implies that $\delta < \tfrac{2\pi}{2k+2} = \tfrac{\pi}{k+1}$.
\item
Proposition~\ref{prop:upper} states if $\conn(\cech{S^n}{\pi-\delta}) \ge k-1$, then $\delta\le 2\cov_{S^n}(k+1)$.
For $n=1$, if $\conn(\cech{S^1}{\pi-\delta}) \ge k-1$, then $\delta\le 2\cov_{S^1}(k+1) = \tfrac{2\pi}{k+1}$.
Via the substitution $k\mapsto 2k+1$, we get that if $\conn(\cech{S^1}{\pi-\delta}) \ge 2k$, then $\delta\le \tfrac{2\pi}{2k+2} = \tfrac{\pi}{k+1}$.
\item
By~\eqref{eq:S1conn}, $\conn(\cech{S^1}{\pi-\delta}) = 2k$ if and only if $\delta \in \bigl[\tfrac{\pi}{k+2},\tfrac{\pi}{k+1}\bigr)$.
\end{itemize}

Lov{\'a}sz's bound $\conn(\cech{S^n}{\pi-\delta}) \le \chi(\bor{S^n}{\delta}) - 3$ and Proposition~\ref{prop:upper} both give the optimal upper bound of $\tfrac{\pi}{k+1}$ on $\delta$, for even connectivity, when $n=1$.
The former excludes the possibility $\delta=\tfrac{\pi}{k+1}$, whereas the latter does not, and hence, the former upper bound is slightly better.
This makes sense since our Proposition~\ref{prop:upper} is derived from Lov{\'a}sz's bound but does not use exact chromatic numbers of Borsuk graphs.

\section{Homological dimension}
\label{sec:hdim}

In this section, we explore the largest dimension in which the homology of $\cech{S^n}{r}$ is non-zero.
For a topological space $Y$ and a commutative ring $R$ with unity, the $R$-homological dimension of $Y$ is defined as 
\[
\hdim_R(Y) \coloneqq \max\{i\in\Z \mid \widetilde{H}_i(Y;R) \ne 0\}.
\]

The $\Z$-homological dimension of a space is informative of its topology: a space that is homotopy equivalent to a CW complex of dimension $k$ has $\Z$-homological dimension at most $k$.
The $\Z$-homological dimension of a space is often connected to its homotopy connectivity.
Indeed, if $Y$ is simply connected, then by the Hurewicz theorem~\cite[Theorem~4.32]{Hatcher}, we have $\conn(Y) \le \hdim_{\Z}(Y)-1$.
Note that $\conn(Y)$ need not\footnote{
To see that we may have $\hdim_{\Z}(X) > \conn(X) > \hdim_{\Z_2}(X)$, let $p$ be an odd prime, and let $X = M(\Z_p,r+1)$ be a Moore space that is $r$-connected for some $r>0$.
Then $\hdim_{\Z}(X)=r+1>r>0=\hdim_{\Z_2}(X)$.
}
be a lower bound to $\hdim_R(Y)-1$ for a general ring $R$.

We know that $\cech{S^n}{\pi-\delta}\simeq S^n$ for all $\delta\in[\tfrac{\pi}{2},\pi)$ due to the nerve lemma.
We also know that $\cech{S^1}{\pi-\delta}$ is homotopic to some odd-dimensional sphere for each $\delta>0$.
Hence, the $\Z$-homological dimension of $\cech{S^n}{\pi-\delta}$ is known in all these cases.
So, let us restrict our attention to the case when $n\ge 2$ and $\delta\in(0,\tfrac{\pi}{2})$.

In Proposition \ref{prop:fintoinf}, we gave a lower bound on the connectivity $\conn(\cech{S^n}{\pi-\delta})$.
In particular, if $0<\delta<\cov_{S^n}(2k+2)$, then $\conn(\cech{S^n}{\pi-\delta}) \ge k$.
For $n\ge 2$, it follows from~\cite[Theorem 11.2]{virk20201} that the \v{C}ech complex $\cech{S^n}{\pi-\delta}$ is simply connected for each $\delta\in (0,\tfrac{\pi}{2})$.
Hence, using the relation between connectivity and $\Z$-homological dimension stated in the previous paragraph, we get a lower bound to $\hdim_{\Z}(\cech{S^n}{\pi-\delta})$:
if $0<\delta<\cov_{S^n}(2k+2)$, then $\hdim_{\Z}(\cech{S^n}{\pi-\delta})\geq k+1$.
In other words, for any $\delta \in (0,\tfrac{\pi}{2})$ and $n\ge 2$, Corollary~\ref{cor:lowerbound2} gives that
\begin{equation}
\label{eq:h-dimBarmak}
\hdim_{\Z}(\cech{S^n}{\pi-\delta})\ge \tfrac{1}{2}\numcov_{S^n}(\delta) - 1.
\end{equation}

Next, we try to use ideas from~\cite[Lemma~4.3]{MatsushitaWakatsuki} to get a lower bound to $\hdim_{\Z}(\cech{S^n}{\pi-\delta})$ better than that provided in~\eqref{eq:h-dimBarmak}.
We consider the $\Z_2$-homological dimension.
Using the universal coefficient theorem for homology~\cite[Theorem~3A.3]{Hatcher}, one can see that for any space $Y$, we have $\hdim_{\Z}(Y)\ge \hdim_{\Z_2}(Y)-1$.
In fact, if $H_*(Y;\Z)$ is torsion-free, then $\hdim_{\Z}(Y)\ge \hdim_{\Z_2}(Y)$.
So, a lower bound to $\hdim_{\Z_2}(\cech{S^n}{\pi-\delta})$ will give us a lower bound to $\hdim_{\Z}(\cech{S^n}{\pi-\delta})$.

To potentially lower bound $\hdim_{\Z_2}(\cech{S^n}{\pi-\delta})$, the theory of box complexes will be relevant.
Box complexes of \emph{finite} graphs have been well-studied; we refer the reader to~\cite{MZ,csorba2007homotopy,MatsushitaWakatsuki}.

There are several related definitions of box complexes {(see, for instance,~\cite{MZ, csorba2007homotopy})}, which all share similar properties.
{In particular, we are using the following definition of box complex.
For a \emph{finite} graph $G$, the box complex $\cB(G)$ is defined to be
\begin{multline*}
\cB(G) = \{A \uplus B \,\mid\ A,B \subset V(G),\ A \cap B = \emptyset,\ G[A,B]\text{ is complete bipartite},\ \CN(A)\neq \emptyset \neq \CN(B)\}.
\end{multline*}
Here, $A\uplus B=\{(a,0)\mid a\in A\}\cup \{(b,1)\mid b\in B\}$, $\CN(A)=\{v\in V(G)\mid (v,a)\in E(G)\ \text{for all }a\in A\}$, {$\CN(B)=\{v\in V(G)\mid (v,b)\in E(G)\ \text{for all }b\in B\}$,} and $G[A,B]$ is a (not necessarily induced) subgraph of $G$ with vertex set $A\cup B$ and edge set $\{(a,b)\in E(G)\mid a\in A, b\in B\}$, where $V(G)$ and $E(G)$ denote the sets of vertices and edges of $G$, respectively.}

For a finite graph $G$, the box complex $\cB(G)$ is a free $\Z_2$-simplicial complex
satisfying the following three properties:
\begin{enumerate}
\item[(a)] The box complex $\cB(G)$ and the neighborhood complex $N(G)$ are homotopy equivalent.
\item[(b)] The box complex $\cB(K_m)$ of the complete graph $K_m$ is $\Z_2$-homotopy equivalent to the sphere $S^{m-2}$.
\item[(c)] If $H$ is a subcomplex of $G$, then $\cB(H)$ is a $\Z_2$-subcomplex of $\cB(G)$.
\end{enumerate}

Since box complexes have been studied on finite graphs, we restrict ourselves to finite subsets $X$ of $S^n$ and work with them, in the hope that the behavior of \v{C}ech complexes of finite subsets governs the behavior of \v{C}ech complexes of the entire sphere as the finite subset grows denser and denser.
Instead of the (intrinsic) \v{C}ech complex $\cech{X}{r}\coloneqq \mathrm{Nerve}(\{B_{X}(x;r)\}_{x\in X})$ defined using balls in $X$, we emphasize that moving forward, we will instead work with the \emph{ambient} \v{C}ech complex $\acech{X}{S^n}{r}\coloneqq \mathrm{Nerve}(\{B_{S^n}(x;r)\}_{x\in X})$ defined using balls in $S^n$.
That is, $\sigma \subseteq X$ is a simplex in $\acech{X}{S^n}{r}$ if the intersection $\cap_{x\in \sigma}B_{S^n}(x,r)$ of balls in $S^n$ is nonempty.

We define the (open) Borsuk graph of $X\subseteq S^n$ for scale $\delta>0$ similarly, i.e., the vertices of $\bor{X}{\delta}$ are the points in $X$ and there is an edge between $v, w\in X$ if $d(v, w)> \delta$.
Here, the metric $d$ is the geodesic distance, derived from $S^n$.
We use $\acech{X}{S^n}{r}$ instead of $\cech{X}{r}$ since $N(\bor{X}{\delta}) = \acech{X}{S^n}{\pi-\delta}$.

To bound $\hdim_{\Z_2}(\acech{X}{S^n}{r})$ from below for finite $X\subseteq S^n$, we will also use the fact that if $Y$ is a finite free $\Z_2$-simplicial complex and if there is a $\Z_2$-map from $S^k$ to $Y$, then $\hdim_{\Z_2}(Y)\ge k$; see~\cite[Lemma~7]{matsushita2023dominance} or~\cite[Lemma~4.4]{MatsushitaWakatsuki}.
We also use the packing number from Section~\ref{ssec:metric}.

\begin{proposition}\label{z2hdim}
Let $X$ be a finite subset of $S^n$ and let $\delta > 0$.
Then
\[
\hdim_{\Z_2}(\acech{X}{S^n}{\pi-\delta}) \geq \numpack_{X}(\delta)-2.
\]
\end{proposition}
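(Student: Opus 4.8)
The plan is to exploit the identity $N(\bor{X}{\delta}) = \acech{X}{S^n}{\pi-\delta}$ together with the box complex machinery recalled above, reducing the estimate to finding a large clique in the Borsuk graph $\bor{X}{\delta}$. The key observation is that a $\delta$-packing of $X$ is precisely a clique in $\bor{X}{\delta}$: if $x_1,\dots,x_m\in X$ satisfy $d(x_i,x_j)>\delta$ for all $i\ne j$, then by the definition of the (open) Borsuk graph, each pair $x_i,x_j$ is joined by an edge, so the induced subgraph on $\{x_1,\dots,x_m\}$ is the complete graph $K_m$. Taking $m=\numpack_X(\delta)$ and a packing realizing this maximum thus produces a copy of $K_m$ as a subgraph of $\bor{X}{\delta}$.

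First I would feed this clique into the box complex. By property (c), the inclusion $K_m\hookrightarrow\bor{X}{\delta}$ induces an inclusion $\cB(K_m)\hookrightarrow\cB(\bor{X}{\delta})$ of $\Z_2$-simplicial complexes, and this inclusion is a $\Z_2$-map. By property (b), there is a $\Z_2$-homotopy equivalence $\cB(K_m)\simeq S^{m-2}$, which in particular supplies a $\Z_2$-map $S^{m-2}\to\cB(K_m)$. Composing, I obtain a $\Z_2$-map $S^{m-2}\to\cB(\bor{X}{\delta})$.

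Next I would invoke the criterion that $\hdim_{\Z_2}(Y)\ge k$ whenever $Y$ is a finite free $\Z_2$-complex admitting a $\Z_2$-map from $S^k$. Since $X$ is finite, $\bor{X}{\delta}$ is a finite graph and $\cB(\bor{X}{\delta})$ is a finite free $\Z_2$-simplicial complex; the $\Z_2$-map above then gives $\hdim_{\Z_2}(\cB(\bor{X}{\delta}))\ge m-2$. Finally, property (a) supplies a homotopy equivalence $\cB(\bor{X}{\delta})\simeq N(\bor{X}{\delta})=\acech{X}{S^n}{\pi-\delta}$, and since $\hdim_{\Z_2}$ is a homotopy invariant, this yields $\hdim_{\Z_2}(\acech{X}{S^n}{\pi-\delta})\ge m-2=\numpack_X(\delta)-2$, as desired.

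I expect the argument to be essentially bookkeeping once the packing--clique correspondence is in hand; the only points requiring care are verifying that the open ($>$) convention for the Borsuk graph matches the strict inequality in the definition of $\numpack_X(\delta)$, so that packing points really are pairwise adjacent and form an honest $K_m$, and confirming that the $\Z_2$-homotopy equivalence furnished by property (b) can be oriented as a $\Z_2$-map \emph{out of} the sphere $S^{m-2}$ and \emph{into} $\cB(K_m)$, so that the composite with the inclusion lands in $\cB(\bor{X}{\delta})$ in precisely the direction demanded by the homological dimension criterion.
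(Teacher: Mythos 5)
Your proposal is correct and follows essentially the same route as the paper's proof: embed the clique $K_m$ arising from a maximal $\delta$-packing into $\bor{X}{\delta}$, use properties (b) and (c) of box complexes to produce a $\Z_2$-map $S^{m-2}\to\cB(\bor{X}{\delta})$, apply the criterion of~\cite[Lemma~4.4]{MatsushitaWakatsuki}, and transfer back to $\acech{X}{S^n}{\pi-\delta}$ via property (a). The only cosmetic difference is that the paper states the intermediate bound abstractly as $\hdim_{\Z_2}(\cB(G))\ge\omega(G)-2$ for an arbitrary finite graph $G$ and then notes $\omega(\bor{X}{\delta})=\numpack_X(\delta)$, whereas you build the clique directly from the packing.
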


\begin{proof}
Let $\omega(G)$ denote the clique number of a finite graph $G$, which is the number of vertices in the largest clique of $G$.
So, $K_{\omega(G)}$ is a subcomplex of the graph $G$, and thus, $\cB(K_{\omega(G)})\xhookrightarrow{\Z_2} \cB(G)$.
Also, $\cB(K_{\omega(G)})\simeq_{\Z_2}S^{\omega(G)-2}$.
Hence, there exists a $\Z_2$-map from $S^{\omega(G)-2}$ to $\cB(K_{\omega(G)})$.
Since $\cB(K_{\omega(G)})$ is a $\Z_2$-subcomplex of $\cB(G)$, there exists a $\Z_2$-map from $S^{\omega(G)-2}$ to $\cB(G)$.
Therefore, from~\cite[Lemma~4.4]{MatsushitaWakatsuki}, we conclude that $\hdim_{\Z_2}(\cB(G))\ge \omega(G)-2$.
Taking $G=\bor{X}{\delta}$ for a finite subset $X\subset S^n$, this gives 
\begin{align*}
\hdim_{\Z_2}(\acech{X}{S^n}{\pi-\delta})
&= \hdim_{\Z_2}(N(\bor{X}{\delta})) \\
&= \hdim_{\Z_2}(\cB(\bor{X}{\delta})) \\
&\ge \omega(\bor{X}{\delta})-2 \\
&= \numpack_{X}(\delta)-2.
\end{align*}
The equality $\omega(\bor{X}{\delta}) = \numpack_X(\delta)$ used in the last line above follows from the definitions.
\end{proof}

Using Proposition~\ref{z2hdim} and the relationship between $\hdim_{\Z}(Y)$ and $\hdim_{\Z_2}(Y)$ stated above, we get the following.

\begin{corollary}\label{zhdim}
For a finite subset $X\subseteq S^n$ and $\delta > 0$, we have that
\[
\hdim_{\Z}(\acech{X}{S^n}{\pi-\delta}) \geq \numpack_{X}(\delta)-3.
\]
Furthermore, if $H_*(\acech{X}{S^n}{\pi-\delta};\Z)$ is torsion-free for some $\delta>0$ and $X\subset S^n$, then we have that
\[
\hdim_{\Z}(\acech{X}{S^n}{\pi-\delta}) \geq \numpack_{X}(\delta)-2.
\]
\end{corollary}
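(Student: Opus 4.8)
The plan is to deduce both inequalities directly from Proposition~\ref{z2hdim} by passing from $\Z_2$-coefficients to $\Z$-coefficients via the universal coefficient theorem relationships recorded earlier in this section. The substantive work---building the box complex, identifying its clique as a copy of $\cB(K_{\omega})\simeq_{\Z_2} S^{\omega-2}$, and invoking~\cite[Lemma~4.4]{MatsushitaWakatsuki} to extract the $\Z_2$-homological dimension bound---has already been carried out in Proposition~\ref{z2hdim}, so this corollary is a formal consequence.

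For the first claim, I would set $Y=\acech{X}{S^n}{\pi-\delta}$ and apply the inequality $\hdim_{\Z}(Y)\ge \hdim_{\Z_2}(Y)-1$, which holds for any space by the universal coefficient theorem~\cite[Theorem~3A.3]{Hatcher} (a nonzero class in $\widetilde H_i(Y;\Z_2)$ forces a nonzero class in $\widetilde H_i(Y;\Z)$ or $\widetilde H_{i-1}(Y;\Z)$ through the $\mathrm{Tor}$ and reduction-mod-$2$ terms). Chaining this with the bound $\hdim_{\Z_2}(Y)\ge \numpack_X(\delta)-2$ from Proposition~\ref{z2hdim} yields
\[
\hdim_{\Z}(Y)\ \ge\ \hdim_{\Z_2}(Y)-1\ \ge\ \numpack_X(\delta)-3,
\]
which is the asserted general lower bound.

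For the second claim, I would invoke the sharper inequality $\hdim_{\Z}(Y)\ge \hdim_{\Z_2}(Y)$, which holds when $H_*(Y;\Z)$ is torsion-free: in that case the $\mathrm{Tor}$ term in the universal coefficient theorem vanishes, so $\widetilde H_i(Y;\Z_2)\cong \widetilde H_i(Y;\Z)\otimes\Z_2$, and a top nonzero $\Z_2$-class lifts to a nonzero $\Z$-class in the same degree. Applying this under the stated torsion-free hypothesis and again substituting Proposition~\ref{z2hdim} gives
\[
\hdim_{\Z}(Y)\ \ge\ \hdim_{\Z_2}(Y)\ \ge\ \numpack_X(\delta)-2.
\]

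I do not anticipate any genuine obstacle here, since both auxiliary inequalities are already stated and justified in the text preceding Proposition~\ref{z2hdim}, and the only remaining move is arithmetic. The one point to state carefully is that the torsion-freeness hypothesis is exactly what upgrades $\hdim_{\Z}(Y)\ge\hdim_{\Z_2}(Y)-1$ to $\hdim_{\Z}(Y)\ge\hdim_{\Z_2}(Y)$, thereby removing the loss of one dimension; everything else is a direct substitution.
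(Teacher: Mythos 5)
Your proposal is correct and is exactly the paper's argument: the paper also derives this corollary by combining Proposition~\ref{z2hdim} with the two universal-coefficient-theorem inequalities $\hdim_{\Z}(Y)\ge\hdim_{\Z_2}(Y)-1$ (in general) and $\hdim_{\Z}(Y)\ge\hdim_{\Z_2}(Y)$ (under torsion-freeness) stated earlier in the section. Nothing is missing; your identification of torsion-freeness as precisely what removes the one-dimension loss is also how the paper frames it.
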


What about the case when $X=S^n$ is infinite?
Given an infinite graph $G$, while we can define its box complex $\cB(G)$, we do not know if $\cB(G)$ satisfies the same properties as those mentioned above for the box complexes of finite graphs.

\begin{question}
\label{ques:lower-bound-hdim}
If the results for finite graphs were true for certain infinite graphs (such as $\bor{S^n}{\delta}$) as well, we might get that 
\begin{equation}
\label{eq:h-dimMatsushita}
\hdim_{\Z}(\cech{S^n}{\pi-\delta}) \ge \numpack_{S^n}(\delta)-3.
\end{equation}
Is this actually the case? 
\end{question}

We note that if~\eqref{eq:h-dimMatsushita} is true and, additionally, if the integral homology of $\cech{S^n}{\pi-\delta}$ is torsion-free for some $n\ge 1$ and $\delta>0$, then we have that
\begin{equation}
\label{eq:h-dimMatsushita2}
\hdim_{\Z}(\cech{S^n}{\pi-\delta}) \ge \numpack_{S^n}(\delta)-2.
\end{equation}

It seems that the proof technique of~\cite[Lemma~7]{matsushita2023dominance} might offer tools towards proving~\eqref{eq:h-dimMatsushita} (and consequently~\eqref{eq:h-dimMatsushita2}).
Indeed, proceeding as in the proof of~\cite[Lemma~7]{matsushita2023dominance}, we get that for an infinite $\Z_2$-complex $Y$ equipped with a $\Z_2$-map $S^k \to Y$, either $\hdim_{\Z_2}(Y) \ge k$ or $\hdim_{\Z_2}(\tfrac{Y}{\Z_2})$ is infinite.

\begin{question}
In our context, can we show that $\hdim\left(\frac{\cB(\bor{S^n}{\delta})}{\Z_2}\right)<\infty$?
\end{question}

\begin{remark}
Let us see how good of a bound~\eqref{eq:h-dimMatsushita2} would be, in the case of the \v{C}ech complexes of a circle.
By ~\cite[Theorem 9.8]{AA-VRS1} and~\eqref{eq:S1cech}, we have
\[
\hdim_{\Z}(\cech{S^1}{\pi-\delta}) = 2k+1 \hspace{5mm} \text{if} \hspace{5mm} \delta \in \left[\frac{\pi}{k+2},\frac{\pi}{k+1}\right).
\]
Also, note that $\numpack_{S^1}(\delta) = k$ when $\delta \in [\tfrac{2\pi}{k+1}, \tfrac{2\pi}{k} )$.
Thus, if $\hdim_{\Z}(\cech{S^1}{\pi-\delta}) \leq 2k+1$, then according to~\eqref{eq:h-dimMatsushita2}, we would have $\numpack_{S^n}(\delta) \leq 2k +3$.
This implies that $\delta \ge \frac{2\pi}{2k +4}$ or $\delta \ge \frac{\pi}{k +2}$, which is exactly the lower bound given by ~\cite[Theorem 9.8]{AA-VRS1}.
Hence, the lower bound in~\eqref{eq:h-dimMatsushita2} would be tight in the case of the circle, and also a $4$-fold improvement over the prior lower bound~\eqref{eq:h-dimBarmak}.
However, we do not know whether results similar to that of~\cite[Lemma~4.3]{MatsushitaWakatsuki} hold for infinite graphs.
\end{remark}

\section{Conclusion}
\label{sec:conclusion}

We conclude with a list of open questions and conjectures, that, we hope, will stimulate further interest and research.

\begin{conjecture}
\label{conj:non-decreasing}
For $n\ge 1$, the connectivity of the \v{C}ech complex $\cech{S^n}{r}$ is a non-decreasing function of the scale $r\in(0,\pi)$.
\end{conjecture}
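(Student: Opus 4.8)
The plan is to exploit the fact that the \v{C}ech complexes of $S^n$ form a filtration: for $0<r\le r'<\pi$, enlarging every ball radius can only turn an empty intersection $\cap_{x\in\sigma}B(x,r)$ into a nonempty one, so $\cech{S^n}{r}\subseteq\cech{S^n}{r'}$, and there is a natural inclusion $\iota_{r,r'}\colon\cech{S^n}{r}\hookrightarrow\cech{S^n}{r'}$. The conjecture is exactly the statement that $\iota_{r,r'}$ never \emph{decreases} connectivity. I would first reduce this to a single clean sufficient condition: that $\iota_{r,r'}$ be surjective on $\pi_i$ for every $i$. Indeed, if $\cech{S^n}{r}$ is $k$-connected, then $\pi_i(\cech{S^n}{r})=0$ for $i\le k$, so surjectivity of $\iota_{r,r',*}$ forces $\pi_i(\cech{S^n}{r'})=0$ for $i\le k$ as well, giving $\conn(\cech{S^n}{r'})\ge\conn(\cech{S^n}{r})$. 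Geometrically, surjectivity on $\pi_i$ says that every map $S^i\to\cech{S^n}{r'}$ is homotopic, inside the larger complex, to one factoring through the subcomplex $\cech{S^n}{r}$. The key point to emphasize up front is the warning that connectivity is \emph{not} monotone under arbitrary inclusions of simplicial complexes, so genuine geometric input about spheres is unavoidable.

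First I would try to realize this surjectivity directly. By compactness and the finite-approximation argument of Proposition~\ref{prop:fintoinf}, any $g\colon S^i\to\cech{S^n}{r'}$ may be taken simplicial into a finite subcomplex $\cech{X}{r'}$ with $X\subseteq S^n$ finite, where a simplex $[x_0,\dots,x_m]$ lies in $\cech{X}{r'}$ precisely when its vertices have geodesic circumradius less than $r'$. The idea is to build a homotopy that slides the vertices $x_j$ along geodesics so as to shrink the circumradii of all simplices they span below $r$, thereby pushing $g$ into $\cech{S^n}{r}$. For a single simplex this is elementary---move its vertices toward a common circumcenter. The hard part, which I expect to be the main obstacle, is to carry this out \emph{consistently and simultaneously} across all simplices of the image, since a given vertex typically lies in many simplices with competing circumcenters; moreover on $S^n$, unlike in Euclidean space, circumcenters need not be unique once the points spread beyond a hemisphere, which is precisely the regime $r>\tfrac{\pi}{2}$ of interest.

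A complementary route, which I would pursue in parallel, goes through the Borsuk-graph picture of Section~\ref{sec:chromatic}. As $\delta$ decreases, the edge set of $\bor{S^n}{\delta}$ only grows, so the chromatic number $\chi(\bor{S^n}{\delta})$ is manifestly non-increasing in $\delta$, i.e.\ non-decreasing in $r=\pi-\delta$. Since Lov\'asz's bound gives $\conn(\cech{S^n}{\pi-\delta})\le\chi(\bor{S^n}{\delta})-3$, it would suffice to prove that this bound is an \emph{equality} for Borsuk graphs; then $\conn(\cech{S^n}{r})=\chi(\bor{S^n}{\pi-r})-3$ would inherit monotonicity directly from the edge-monotonicity of $\chi$. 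Supporting evidence is that, at the level of finite dense subsets $X\subseteq S^n$ (where box complexes are known to behave, cf.\ Section~\ref{sec:hdim}), the $\Z_2$-index of $\cB(\bor{X}{\delta})$ is monotone, because a subgraph inclusion induces a $\Z_2$-map $\cB(\bor{X}{\delta'})\to\cB(\bor{X}{\delta})$ for $\delta'\ge\delta$; the difficulty is that $\Z_2$-index and connectivity agree only for spaces as rigid as spheres, so this yields monotonicity of connectivity only if one already knows the homotopy types are spheres (or wedges thereof).

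Finally, I would check any proposed argument against the one fully understood case $n=1$, where \eqref{eq:S1conn} shows $\conn(\cech{S^1}{\pi-\delta})$ runs through $0,2,4,\dots$ as $\delta$ decreases and is indeed non-decreasing in $r$. I do not expect the covering-number bounds of Theorem~\ref{thm:main} to yield the conjecture by sandwiching, since the multiplicative gap between $\cov_{S^n}(2k+2)$ and $2\cov_{S^n}(k+1)$ leaves room for the connectivity to oscillate between the two monotone bounds. The essential content of the conjecture therefore lies precisely in ruling out such oscillation, which is exactly what makes either the geometric pushing argument of the second paragraph or the Lov\'asz-tightness argument of the third the genuine crux.
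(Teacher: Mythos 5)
You are attempting to prove Conjecture~\ref{conj:non-decreasing}, which is stated in the paper as an \emph{open} conjecture: the paper offers no proof, only the remarks that it holds for $n=1$ by \cite[Section~9]{AA-VRS1} and that for $n\ge 2$ simple connectivity is known. Your proposal, as you yourself acknowledge, does not close this gap either. The reduction to surjectivity of $\iota_{r,r',*}$ on homotopy groups is sound, and the finite-approximation step mirrors the paper's Proposition~\ref{prop:fintoinf}; but your first route stops exactly at the crux. You need a homotopy that simultaneously moves vertices shared by many simplices so that \emph{all} circumradii drop below $r$, and you offer no construction: there is no canonical ``circumcenter flow'' once vertex sets spread beyond a hemisphere (circumcenters are then non-unique, and any choice is discontinuous in the vertex configuration), which is precisely the regime $r>\tfrac{\pi}{2}$ where the conjecture has content. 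Identifying this obstacle is not the same as overcoming it.

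Your second route is worse than incomplete: its key lemma is refutable from the paper's own results. You propose to show that Lov\'asz's inequality is an \emph{equality} for Borsuk graphs, so that $\conn(\cech{S^n}{r})=\chi(\bor{S^n}{\pi-r})-3$ inherits monotonicity from $\chi$. Take $n=1$. By \eqref{eq:S1conn}, $\conn(\cech{S^1}{\pi-\delta})$ takes only \emph{even} values, equal to $2k$ for $\delta\in[\tfrac{\pi}{k+2},\tfrac{\pi}{k+1})$, while by Lemma~\ref{lem:chromatic_num} we have $\chi(\bor{S^1}{\delta})-3=\lceil\tfrac{2\pi}{\delta}\rceil-3$, which runs through every integer. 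Concretely, for $\delta\in[\tfrac{\pi}{k+2},\tfrac{2\pi}{2k+3})$ one has $\lceil\tfrac{2\pi}{\delta}\rceil=2k+4$, hence $\chi(\bor{S^1}{\delta})-3=2k+1$, whereas $\conn(\cech{S^1}{\pi-\delta})=2k$. So the hoped-for equality fails on a set of positive measure already in the one case where everything is known, and monotonicity of $\chi$ cannot be transferred to connectivity this way. (The $\Z_2$-index variant has the same defect, for the reason you note: index controls connectivity only for sphere-like spaces, which is what one is trying to establish.) The conjecture therefore remains open; your proposal correctly maps the difficulties but establishes nothing beyond what the paper already records.
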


Conjecture~\ref{conj:non-decreasing} is true for $n=1$~\cite[Section~9]{AA-VRS1}, and for $n\ge 2$, it is known that $\cech{S^n}{r}$ is simply connected for all $r$.
For $n\ge 3$, even though though $\cech{S^n}{r}\simeq S^n$ has connectivity $n-1$ for $0<r<\frac{\pi}{2}$, it is not known if $\conn(\cech{S^n}{r})\ge n-1$ for $\frac{\pi}{2}\le r<\pi$.

\begin{conjecture}
\label{conj:countably}
For $n\ge 1$, the homotopy type of the \v{C}ech complex $\cech{S^n}{r}$ changes only countably many times as $r$ varies over the range $(0,\pi)$.
\end{conjecture}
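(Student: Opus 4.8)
The plan is to establish the conjecture by showing that the homotopy type of $\cech{S^n}{r}$ is locally constant except at countably many scales, and that a locally constant function on the interval $(0,\pi)$ that takes only finitely many distinct values on any compact subinterval can change only countably often. The natural strategy is to prove that the set of scales $r$ at which the homotopy type \emph{changes} has no accumulation points in the interior of any compact subinterval bounded away from $\pi$, using the discreteness of the covering radii. First I would fix a compact subinterval $[a,b]\subset(0,\pi)$ and let $r=\pi-\delta$ range over the corresponding $\delta$-interval. Corollary~\ref{cor:main} shows that for $\delta$ in this range, $\conn(\cech{S^n}{\pi-\delta})$ is sandwiched between $\tfrac{1}{2}\numcov_{S^n}(\delta)-2$ and $\numcov_{S^n}(\tfrac{\delta}{2})-2$; since both covering numbers are finite and bounded on $[a,b]$, the connectivity takes only finitely many values there.

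Next I would argue that between consecutive \emph{critical} scales, the homotopy type is constant. The key tool is a persistence/interleaving argument: if $r<r'$ then there is a canonical simplicial inclusion $\cech{S^n}{r}\hookrightarrow\cech{S^n}{r'}$, since enlarging the radius of balls can only add simplices to the nerve. One would then invoke the compactness of $S^n$ together with a stability-type statement to show that the filtration $\{\cech{S^n}{r}\}_{r\in(0,\pi)}$ has only finitely many critical values in each compact subinterval of $(0,\pi)$, where a critical value is a scale at which the inclusion fails to be a homotopy equivalence across some neighborhood. The finiteness on compact subintervals, combined with the exhaustion $(0,\pi)=\bigcup_m[\tfrac{1}{m},\pi-\tfrac{1}{m}]$, would yield that the total number of critical values is at most a countable union of finite sets, hence countable.

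The main obstacle, and the reason this is stated as a conjecture rather than a theorem, is establishing the finiteness of critical scales on each compact subinterval, because $\cech{S^n}{r}$ is the nerve of an \emph{infinite} (uncountable) family of balls, so the standard persistence machinery for finite simplicial filtrations does not apply directly. One cannot simply appeal to the finiteness of the number of simplices, as one does for \v{C}ech complexes of finite point sets. A promising route is to approximate $\cech{S^n}{r}$ by the ambient \v{C}ech complexes $\acech{X}{S^n}{r}$ of sufficiently dense finite subsets $X\subseteq S^n$, as was done in the proof of Proposition~\ref{prop:fintoinf}, and then transfer finiteness results for the finite filtrations to the limit. The difficulty is controlling how the finite approximations' critical scales behave as $X$ grows denser: a priori the number of critical values in a finite approximation could grow without bound as $|X|\to\infty$, even within a fixed compact subinterval, so one needs a uniform bound independent of $X$. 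Obtaining such a uniform bound, perhaps via the connectivity sandwich of Corollary~\ref{cor:main} applied simultaneously to all dense $X$, is exactly the technical heart of the problem and the reason we leave it open.

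\begin{remark}
We emphasize that even the weaker statement—that the homotopy type is \emph{eventually} constant as $r\to\pi^-$—is known via the nerve lemma, but controlling the behavior uniformly across all of $(0,\tfrac{\pi}{2}]$ as the balls lose geodesic convexity is precisely what makes the countability statement subtle.
\end{remark}
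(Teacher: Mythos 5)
This statement is Conjecture~\ref{conj:countably} of the paper: it is explicitly left open, with no proof given (the paper notes only that the case $n=1$ follows from the known homotopy types of $\cech{S^1}{r}$, and speculates that a Morse theory for \v{C}ech complexes of Riemannian manifolds might settle it). Your proposal is likewise not a proof, and to your credit you say so: you reduce the conjecture to the claim that every compact subinterval of $(0,\pi)$ contains only finitely many homotopy-type changes, and then concede that establishing this claim is ``exactly the technical heart of the problem.'' That concession is consistent with the statement's status as open, but judged as a proof attempt there is a genuine gap, and the tool you propose for filling it cannot do the job. Corollary~\ref{cor:main} controls only the \emph{connectivity} of $\cech{S^n}{\pi-\delta}$, and connectivity is a coarse homotopy invariant: its being bounded, or even constant, on a compact subinterval is perfectly compatible with uncountably many homotopy-type changes on that subinterval, since arbitrarily many distinct homotopy types share any given connectivity. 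For the same reason, nothing in Theorem~\ref{thm:main} localizes the change points to the (discrete, away from $0$) set of covering radii; the theorem restricts which connectivities can occur at which scales, not where the homotopy type can change. So the sandwich argument in your first paragraph establishes nothing about the quantity the conjecture concerns.

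Separately, your closing remark is backwards. The nerve lemma gives $\cech{S^n}{r}\simeq S^n$ for $0<r\le\frac{\pi}{2}$, i.e., constancy of the homotopy type near $r\to 0^+$; it says nothing about $r\to\pi^-$. In fact the paper proves the opposite of what you assert: in the unlabeled corollary following Corollary~\ref{cor:main}, since $\numcov_{S^n}(\delta)\to\infty$ as $\delta\to 0$, the connectivity of $\cech{S^n}{\pi-\delta}$ tends to infinity, so the homotopy type changes infinitely many times in every neighborhood of $r=\pi$ and is certainly not eventually constant there. This is worth internalizing because it shows the change scales genuinely accumulate at $\pi$; your exhaustion $(0,\pi)=\bigcup_m[\tfrac{1}{m},\pi-\tfrac{1}{m}]$ is the right way to make countability compatible with that accumulation, but the finiteness on each compact piece remains exactly as open as the conjecture itself.
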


\begin{question}
Can we develop a Morse theory to describe the \v{C}ech complexes of Riemannian manifolds?
\end{question}

If so, Conjecture~\ref{conj:countably}, which is true in the case of the circle $(n=1)$, could be proven by showing that in such a Morse theory, there are only countably many critical events for $\cech{S^n}{r}$ as $r$ increases.

\begin{conjecture}\label{conj:finite}
For each dimension $n\ge 1$ and scale $r>0$, the \v{C}ech complex $\cech{S^n}{r}$ is homotopy equivalent to a finite CW complex.
\end{conjecture}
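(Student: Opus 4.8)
The plan is to exploit the fact that $\cech{S^n}{r}$, being a simplicial complex, is already a (typically infinite) CW complex, and to show it has the homotopy type of a \emph{finite} one by reducing the problem to finitely many cells. First I would dispose of the known ranges: for $0<r\le\frac{\pi}{2}$ the nerve lemma gives $\cech{S^n}{r}\simeq S^n$ and for $r\ge\pi$ the complex is contractible, while the case $n=1$ is settled by~\cite{AA-VRS1} via~\eqref{eq:S1cech}. This leaves the essential case $n\ge 2$ and $\frac{\pi}{2}<r<\pi$, where $\cech{S^n}{r}$ is simply connected by~\cite{virk20201}. In this regime, by the standard finiteness criterion for simply connected CW complexes (see~\cite{Hatcher}), it suffices to prove two things: that $\widetilde H_i(\cech{S^n}{r};\Z)$ is finitely generated for every $i$, and that it vanishes for all $i$ larger than some $N=N(n,r)$. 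Note that $\cech{S^n}{r}$ is genuinely infinite-dimensional as a simplicial complex, since any finite subset of a ball of radius $r>\frac{\pi}{2}$ spans a simplex, so finite-dimensionality can only ever hold up to homotopy.

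My preferred route to both properties is \emph{discretization}. For a finite $\varepsilon$-dense subset $X\subseteq S^n$, the ambient \v{C}ech complex $\acech{X}{S^n}{r}$ is a finite full subcomplex of $\cech{S^n}{r}$, so inclusion gives $\acech{X}{S^n}{r}\hookrightarrow\cech{S^n}{r}$. In the other direction, a nearest-point ``snapping'' map $s\colon S^n\to X$ satisfies $d(y,s(y))<\varepsilon$, and hence carries any ball of radius $r$ into a ball of radius $r+\varepsilon$; thus $s$ induces a simplicial map $\cech{S^n}{r}\to\acech{X}{S^n}{r+\varepsilon}$. Together with the scale inclusions, these maps assemble into an interleaving between $\cech{S^n}{\bullet}$ and $\acech{X}{S^n}{\bullet}$ over the window $[r,r+\varepsilon]$, with all composites contiguous to the honest inclusions. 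If one knows that the homotopy type of $\cech{S^n}{\bullet}$ is \emph{locally constant} near $r$ (no homotopy-critical scale in $[r,r+\varepsilon]$ for $\varepsilon$ small), then every inclusion in the interleaving is a homotopy equivalence, and it follows that $\acech{X}{S^n}{r}\hookrightarrow\cech{S^n}{r}$ is a homotopy equivalence onto the \emph{finite} complex $\acech{X}{S^n}{r}$, which proves the conjecture outright (and en route supplies both the finite generation and the vanishing of homology).

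The hard part will be this local constancy, which is exactly the \emph{tameness}---finiteness of the set of critical scales on any compact subinterval of $(0,\pi)$---that the hoped-for Morse theory of Section~\ref{sec:conclusion} would deliver. Absent such a theory, one can still harvest the compactness argument used in Proposition~\ref{prop:fintoinf}: every homology class of $\cech{S^n}{r}$ is supported on some finite subcomplex, so $\widetilde H_*(\cech{S^n}{r};\Z)$ is the colimit of the homologies of its finite ambient \v{C}ech subcomplexes. This reduces finite generation of $\widetilde H_i$, and the key vanishing $\widetilde H_i=0$ for $i>N$, to an a priori \emph{upper} bound on $\hdim_{\Z}(\cech{S^n}{r})$. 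Such an upper bound is precisely what the present paper does not provide---Section~\ref{sec:hdim} gives only lower bounds on homological dimension---so I expect establishing finite homological dimension (equivalently, ruling out arbitrarily high-degree homology as the vertex set is refined) to be the genuine obstacle, whichever of the two routes one follows.
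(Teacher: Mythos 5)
You should be aware that the statement you are addressing is not a theorem of the paper at all: it is Conjecture~\ref{conj:finite}, stated in the concluding Section~\ref{sec:conclusion} as an open problem, and the paper offers no proof to compare against --- only the remarks that the conjecture holds for $n=1$ (via \eqref{eq:S1cech}) and for all $n\ge 1$ when $r<\frac{\pi}{2}$ (nerve lemma), and that it becomes false if closed balls are used. Your write-up is therefore correctly read as a proof \emph{strategy} with an explicitly missing step, and the missing step you identify is exactly the open content of the conjecture. Concretely: your discretization/interleaving route needs local constancy of the homotopy type of $\cech{S^n}{\bullet}$ near $r$ (tameness), and your homological route needs finite generation of $\widetilde{H}_i(\cech{S^n}{r};\Z)$ in each degree together with an upper bound on $\hdim_{\Z}(\cech{S^n}{r})$. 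Neither is known for $n\ge 2$ and $\frac{\pi}{2}<r<\pi$; the paper's Section~\ref{sec:hdim} provides only \emph{lower} bounds on homological dimension, and the Morse theory you invoke is itself listed by the paper as an open question. So there is a genuine gap, and it is not a fixable technicality --- it is the conjecture itself.

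Two smaller points about the parts you did write. First, in the interleaving step, local constancy of the full \v{C}ech filtration alone does not justify your conclusion that $\acech{X}{S^n}{r}\hookrightarrow\cech{S^n}{r}$ is a homotopy equivalence; a two-out-of-six argument would also require the finite filtration $\acech{X}{S^n}{\bullet}$ to be locally constant, which you have not arranged. What does follow from your maps is weaker but sufficient: since the composite of the snapping map with the inclusion $\acech{X}{S^n}{r+\varepsilon}\hookrightarrow\cech{S^n}{r+\varepsilon}$ is contiguous to the scale inclusion $\cech{S^n}{r}\hookrightarrow\cech{S^n}{r+\varepsilon}$, which is assumed to be a homotopy equivalence, the complex $\cech{S^n}{r}$ is dominated (retracted up to homotopy) by the finite complex $\acech{X}{S^n}{r+\varepsilon}$; since $\cech{S^n}{r}$ is simply connected for $n\ge 2$ by~\cite{virk20201}, Wall's finiteness obstruction lives in $\widetilde{K}_0(\Z)=0$, so finite domination already yields a finite CW model. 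You should route the conditional statement through this argument rather than through the claimed equivalence. Second, in the homological route, the fact that homology is a colimit over finite subcomplexes, plus a uniform bound on their homological dimensions, would give the vanishing $\widetilde{H}_i=0$ for large $i$, but it would not by itself give finite generation in the remaining degrees (a colimit of finitely generated groups need not be finitely generated), so that route would require an additional argument even if the missing upper bound were granted.
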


We know that this conjecture is true for $n=1$ and all $r>0$, and for all $n\ge 1$ when $r<\frac{\pi}{2}$.

We note that Conjecture~\ref{conj:finite} is false if one defines the \v{C}ech complexes using closed balls instead of open balls; see~\cite[Section~9]{AA-VRS1}.

\begin{question}
Assuming that $\cech{S^n}{r}$ is homotopy equivalent to a finite-dimensional CW complex, how does the dimension vary as a function of the scale $r$?
\end{question}

If $\cech{S^n}{r}$ were homotopy equivalent to a $k$-dimensional CW complex, then $\hdim_{\Z}(\cech{S^n}{r})\le k$.
Dually, if $\hdim_{\Z}(\cech{S^n}{r}) > k$ (see Question~\ref{ques:lower-bound-hdim}), then $\cech{S^n}{r}$ could only be homotopy equivalent to a CW complex of dimension larger than $k$.

\begin{question}
What assumptions are needed on a compact Riemannian manifold $M$ so that $\cech{M}{r}$ is homotopy equivalent to a finite (or to a finite-dimensional) CW complex?
\end{question}

\begin{question}
Can the lower bound in Proposition~\ref{prop:fintoinf} be improved?
\end{question}

\begin{question}
What upper and lower bounds can one give on the integral homological dimension of \v{C}ech complexes and Vietoris--Rips complexes of spheres?
\end{question}

\section*{Acknowledgements}

HA was funded in part by the Simons Foundation’s Travel Support for Mathematicians program.
The authors would like to thank Anurag Singh and Samir Shukla for conversations related to Section~\ref{sec:hdim}.
The authors have no competing interests to declare that are relevant to the content of this article.

\bibliographystyle{plain}
\bibliography{CechConnectivity}

@article{AA-VRS1,
  title={The {V}ietoris--{R}ips complexes of a circle},
  author={Adamaszek, Micha{\l} and Adams, Henry},
  journal={Pacific Journal of Mathematics},
  volume={290},
  number={1},
  pages={1--40},
  year={2017}
}

@article{AAFPP-J,
  title={Nerve complexes of circular arcs},
  author={Adamaszek, Micha{\l} and Adams, Henry and Frick, Florian and Peterson, Chris and Previte-Johnson, Corrine},
  journal={Discrete \& Computational Geometry},
  volume={56},
  number={2},
  pages={251--273},
  year={2016}
}

@article{adamaszek2011lower,
  title={On a lower bound for the connectivity of the independence complex of a graph},
  author={Adamaszek, Micha{\l} and Barmak, Jonathan Ariel},
  journal={Discrete Mathematics},
  volume={311},
  number={21},
  pages={2566--2569},
  year={2011},
  publisher={Elsevier}
}

@article{GH-BU-VR,
  title={Gromov--{H}ausdorff distances, {B}orsuk--{U}lam theorems, and {V}ietoris--{R}ips complexes},
  author={Adams, Henry and Bush, Johnathan and Clause, Nate and Frick, Florian and G\'{o}mez, Mario and Harrison, Michael and Jeffs, R.~Amzi and Lagoda, Evgeniya and Lim, Sunhyuk and M\'{e}moli, Facundo and Moy, Michael and Sadovek, Nikola and Superdock, Matt and Vargas, Daniel and Wang, Qingsong and Zhou, Ling},
  journal={Accepted to appear in Algebraic \& Geometric Topology},
  year={2026}
}

@article{ABV,
  title={The connectivity of {V}ietoris--{R}ips complexes of spheres},
  author={Adams, Henry and Bush, Johnathan and Virk, {\v{Z}}iga},
  journal={Journal of Applied and Computational Topology},
  volume={9},
  number={15},
  pages={1--26},
  year={2025}
}

@article{barany1978short,
  title={A short proof of {K}neser's conjecture},
  author={B{\'a}r{\'a}ny, J},
  journal={Journal of Combinatorial Theory, Series A},
  volume={25},
  number={3},
  pages={325--326},
  year={1978}
}

@article{bendersky2023connectivity,
  title={On the Connectivity of the {V}ietoris--{R}ips Complex of a Hypercube Graph},
  author={Bendersky, Martin and Grbi\'{c}, Jelena},
  journal={arXiv preprint arXiv:2311.06407},
  year={2023}
}

@incollection{Bjorner1995,
  title={Topological methods},
  author={Bj{\"o}rner, Anders},
  series={Handbook of Combinatorics},
  volume={1-2},
  pages={1819--1872},
  year={1995},
  publisher={Elsevier Science B.V., Amsterdam}
}

@article{bobrowski2017vanishing,
  title={{On the vanishing of homology in random {\v{C}}ech complexes}},
  author={Bobrowski, Omer and Weinberger, Shmuel},
  journal={Random Structures \& Algorithms},
  volume={51},
  number={1},
  pages={14--51},
  year={2017}
}

@article{bobrowski2022homological,
  title={{Homological connectivity in random {\v{C}}ech complexes}},
  author={Bobrowski, Omer},
  journal={Probability Theory and Related Fields},
  volume={183},
  number={3},
  pages={715--788},
  year={2022},
  publisher={Springer}
}

@article{bobrowski2019random,
  title={{Random {\v{C}}ech complexes on Riemannian manifolds}},
  author={Bobrowski, Omer and Oliveira, Goncalo},
  journal={Random Structures \& Algorithms},
  volume={54},
  number={3},
  pages={373--412},
  year={2019}
}

@article{Borsuk1948,
  title={On the imbedding of systems of compacta in simplicial complexes},
  author={Borsuk, Karol},
  journal={Fundamenta Mathematicae},
  volume={35},
  number={1},
  pages={217--234},
  year={1948}
}

@article{borsuk1933drei,
  title={Drei {S}{\"a}tze {\"u}ber die n-dimensionale euklidische {S}ph{\"a}re},
  author={Borsuk, Karol},
  journal={Fundamenta Mathematicae},
  volume={20},
  number={1},
  pages={177--190},
  year={1933},
  publisher={Polska Akademia Nauk. Instytut Matematyczny PAN}
}

@article{Carlsson2009,
  title={Topology and data},
  author={Carlsson, Gunnar},
  journal={Bulletin of the American Mathematical Society},
  volume={46},
  number={2},
  pages={255--308},
  year={2009}
}

@article{ChazalDeSilvaOudot2014,
  title={Persistence stability for geometric complexes},
  author={Chazal, Fr{\'e}d{\'e}ric and de~Silva, Vin and Oudot, Steve},
  journal={Geometriae Dedicata},
  volume={174},
  pages={193--214},
  year={2014}
}

@mastersthesis{chudnovsky2000systems,
  title={Systems of disjoint representatives},
  author={Chudnovsky, Maria},
  year={2000},
  publisher={Technion}
}

@article{csorba2007homotopy,
  title={Homotopy types of box complexes},
  author={Csorba, P{\'e}ter},
  journal={Combinatorica},
  volume={27},
  number={6},
  pages={669--682},
  year={2007},
  publisher={Springer}
}

@book{EdelsbrunnerHarer,
  title={Computational Topology: An Introduction},
  author={Edelsbrunner, Herbert and Harer, John L},
  publisher	={American Mathematical Society},
  address={Providence},
  year={2010}
}

@article{erdos-hajnal1967,
    title = {Kromatikus gráfokról},
    author = {Erdős, Paul and Hajnal, András},
    journal = {Matematikai Lapok},
    volume={18},
    pages={1--4},
    year={1967}
}

@article{farber2023large,
  title={Large simplicial complexes: {U}niversality, randomness, and ampleness},
  author={Farber, Michael},
  journal={Journal of Applied and Computational Topology},
  volume={8},
  number={6},
  pages={1551--1574},
  year={2023},
  publisher={Springer}
}

@book{Hatcher,
  title={Algebraic Topology},
  author={Hatcher, Allen},
  publisher={Cambridge University Press},
  address={Cambridge},
  year={2002}
}

@article{harrison2023quantitative,
  title={Quantitative upper bounds on the {G}romov--{H}ausdorff distance between spheres},
  author={Harrison, Michael and Jeffs, R Amzi},
  journal={arXiv preprint arXiv:2309.11237},
  year={2023}
}

@incollection{Hausmann1995,
  title={On the {V}ietoris--{R}ips complexes and a cohomology theory for metric spaces},
  author={Hausmann, Jean-Claude},
  series={Annals of Mathematics Studies},
  volume={138},
  publisher={Princeton University Press, Princeton},
  pages={175--188},
  year={1995}
}

@article{Kahle2009,
title={Topology of random clique complexes},
author={Kahle, Matthew},
journal={Discrete Mathematics},
volume={309},
number={6},
pages={1658--1671},
year={2009}
}

@article{kahle2020chromatic,
  title={The chromatic number of random {B}orsuk graphs},
  author={Kahle, Matthew and Martinez-Figueroa, Francisco},
  journal={Random Structures \& Algorithms},
  volume={56},
  number={3},
  pages={838--850},
  year={2020}
}

@article{MZ,
  title={Topological lower bounds for the chromatic number: {A} hierarchy},
  author={Matou\v{s}ek, Ji\v{r}\'{i} and Ziegler, G\"{u}nter M.},
  journal={Jahresbericht der Deutschen Mathematiker-Vereinigung},
  volume={106},
  number={2},
  pages={71--90},
  year={2004}
}

@article{lim2022vietoris,
  title={Vietoris--{R}ips persistent homology, injective metric spaces, and the filling radius},
  author={Lim, Sunhyuk and M{\'e}moli, Facundo and Okutan, Osman B},
  journal={Algebraic \& Geometric Topology},
  volume={24},
  number={2},
  pages={1019--1100},
  year={2024}
}

@article{lim2022gromov,
  title={The {G}romov--{H}ausdorff distance between spheres},
  author={Lim, Sunhyuk and M{\'e}moli, Facundo and Smith, Zane},
  journal={Geometry \& Topology},
  volume={27},
  number={9},
  pages={3733--3800},
  year={2023}
}

@article{lim2026strange,
  title={Strange random topology of the circle},
  author={Lim, Uzu},
  journal={Discrete \& Computational Geometry},
  volume={75},
  pages={465--490},
  year={2026}
}

@article{Lovasz1978,
  title={{K}neser's conjecture, chromatic number, and homotopy},
  author={Lov{\'a}sz, L{\'a}szl{\'o}},
  journal={Journal of Combinatorial Theory, Series A},
  volume={25},
  number={3},
  pages={319--324},
  year={1978}
}

@article{lovasz1983self,
  title={Self-dual polytopes and the chromatic number of distance graphs on the sphere},
  author={Lov{\'a}sz, L{\'a}sl{\'o}},
  journal={Acta Scientiarum Mathematicarum},
  volume={45},
  number={1-4},
  pages={317--323},
  year={1983}
}

@book{matousek2003using,
  title={Using the Borsuk--{U}lam theorem},
  author={Matou\v{s}ek, Ji\v{r}\'{i}},
  year={2008},
  publisher={Springer Science \& Business Media}
}

@article{meshulam2003domination,
  title={Domination numbers and homology},
  author={Meshulam, Roy},
  journal={Journal of Combinatorial Theory, Series A},
  volume={102},
  number={2},
  pages={321--330},
  year={2003}
}

@article{meshulam2001clique,
  title={The clique complex and hypergraph matching},
  author={Meshulam, Roy},
  journal={Combinatorica},
  volume={21},
  number={1},
  pages={89--94},
  year={2001}
}

@phdthesis{MoyPhD,
  title={Persistence and simplicial metric thickenings},
  author={Moy, Michael},
  school={Colorado State University},
  year={2024}
}

@inproceedings{raigorodskii2010chromatic,
  title={On the chromatic numbers of spheres in {E}uclidean spaces},
  author={Raigorodskii, AM},
  booktitle={Doklady Mathematics},
  volume={81},
  number={3},
  pages={379--382},
  year={2010},
  organization={Springer}
}

@article{raigorodskii2012chromatic,
  title={On the chromatic numbers of spheres in $\mathbb{R}^n$},
  author={Raigorodskii, Andrei M},
  journal={Combinatorica},
  volume={32},
  number={1},
  pages={111--123},
  year={2012},
  publisher={Springer}
}

@article{santalo1946convex,
  title={Convex regions on the n-dimensional spherical surface},
  author={Santal{\'o}, Luis A},
  journal={Annals of Mathematics, Second Series},
  pages={448--459},
  volume={47},
  year={1946},
  publisher={JSTOR}
}

@article{siu2024topological,
  title={The Topological Behavior of Preferential Attachment Graphs},
  author={Siu, Chunyin},
  journal={SIAM Journal on Applied Algebra and Geometry},
  volume={9},
  number={2},
  pages={432--455},
  year={2025}
}

@inproceedings{tarnai1991covering,
  title={Covering a sphere by equal circles, and the rigidity of its graph},
  author={Tarnai, Tibor and G{\'a}sp{\'a}r, Zs},
  booktitle={Mathematical Proceedings of the Cambridge Philosophical Society},
  volume={110},
  number={1},
  pages={71--89},
  year={1991},
  organization={Cambridge University Press}
}

@article{taylan2024bounds,
  title={Bounds on the connectivity of the independence complexes of hypergraphs},
  author={Taylan, Demet},
  journal={arXiv preprint arXiv:2411.10365},
  year={2024}
}

@book{tom2008algebraic,
  title={Algebraic topology},
  author={tom Dieck, Tammo},
  volume={8},
  year={2008},
  series={EMS Textbooks in Mathematics},
  publisher={European Mathematical Society (EMS)}
}

@article{Vietoris27,
  title={{{\"U}ber den h{\"o}heren Zusammenhang kompakter R{\"a}ume und eine Klasse von zusammenhangstreuen Abbildungen}},
  author={Vietoris, Leopold},
  journal={Mathematische Annalen},
  volume={97},
  number={1},
  pages={454--472},
  year={1927}
}

@article{virk20201,
  title={1-dimensional intrinsic persistence of geodesic spaces},
  author={Virk, {\v{Z}}iga},
  journal={Journal of Topology and Analysis},
  volume={12},
  number={1},
  pages={169--207},
  year={2020}
}

@article{matsushita2023dominance,
  title={Dominance complexes and vertex cover numbers of graphs},
  author={Matsushita, Takahiro},
  journal={Journal of Applied and Computational Topology},
  volume={7},
  number={2},
  pages={363--368},
  year={2023},
  publisher={Springer}
}

@article{MatsushitaWakatsuki,
      title={Dominance complexes, neighborhood complexes and combinatorial {A}lexander duals}, 
      author={Takahiro Matsushita and Shun Wakatsuki},
      journal={Journal of Combinatorial Theory, Series A},
      volume={211},
      pages={31 pp.},
      year={2025}
}

\bigskip

\end{document}